\providecommand{\tabularnewline}{\\}
\newcommand{\lyxaddress}[1]{
\par {\raggedright #1
\vspace{1.4em}
\noindent\par}
}
  \theoremstyle{plain}
  \newtheorem{lem}{\protect\lemmaname}
\theoremstyle{plain}
\newtheorem{thm}{\protect\theoremname}
  \theoremstyle{plain}
  \newtheorem{cor}{\protect\corollaryname}
  \theoremstyle{remark}
  \newtheorem{rem}{\protect\remarkname}
 \theoremstyle{definition}
  \newtheorem{example}{\protect\examplename}
\DeclareMathOperator{\unif}{U}
\DeclareMathOperator{\N}{N}
\DeclareMathOperator{\pri}{prior}
  \providecommand{\examplename}{Example}
  \providecommand{\lemmaname}{Lemma}
  \providecommand{\remarkname}{Remark}
\providecommand{\corollaryname}{Corollary}
\providecommand{\theoremname}{Theorem}
\begin{document}

\title{\textbf{\large ~}\\
Controlling the degree of caution in statistical inference with the
Bayesian and frequentist approaches as opposite extremes\\
\textbf{~}}

\maketitle
~~\\
David R. Bickel

\lyxaddress{Ottawa Institute of Systems Biology\\
Department of Biochemistry, Microbiology, and Immunology\\
University of Ottawa; 451 Smyth Road; Ottawa, Ontario, K1H 8M5}
\begin{abstract}
In statistical practice, whether a Bayesian or frequentist approach
is used in inference depends not only on the availability of prior
information but also on the attitude taken toward partial prior information,
with frequentists tending to be more cautious than Bayesians. The
proposed framework defines that attitude in terms of a specified amount
of caution, thereby enabling data analysis at the level of caution
desired and on the basis of any prior information. The caution parameter
represents the attitude toward partial prior information in much the
same way as a loss function represents the attitude toward risk. When
there is very little prior information and nonzero caution, the resulting
inferences correspond to those of the candidate confidence intervals
and p-values that are most similar to the credible intervals and hypothesis
probabilities of the specified Bayesian posterior. On the other hand,
in the presence of a known physical distribution of the parameter,
inferences are based only on the corresponding physical posterior.
In those extremes of either negligible prior information or complete
prior information, inferences do not depend on the degree of caution.
Partial prior information between those two extremes leads to intermediate
inferences that are more frequentistic to the extent that the caution
is high and more Bayesian to the extent that the caution is low.
\end{abstract}
\textbf{Keywords:} ambiguity; blended inference; conditional Gamma-minimax;
confidence distribution; confidence posterior; Ellsberg paradox; imprecise
probability; maximum entropy; maxmin expected utility; minimum cross
entropy; minimum divergence; minimum information for discrimination;
minimum relative entropy; observed confidence level; robust Bayesian
analysis

\section{\label{sec:Introduction}Introduction}

The controversy between Bayesianism and frequentism may be irresolvable
to the extent that it reflects honest differences in personal attitudes
of statisticians rather than differences in their knowledge or rationality.
As \citet{Efron2005} pointed out,
\begin{quote}
The Bayesian-frequentist debate reflects two different attitudes about
the process of doing science, both quite legitimate. Bayesian statistics
is well suited to individual researchers, or a research group, trying
to use all of the information at its disposal to make the quickest
possible progress. In pursuing progress, Bayesians tend to be aggressive
and optimistic with their modeling assumptions. Frequentist statisticians
are more cautious and defensive. One definition says that a frequentist
is a Bayesian trying to do well, or at least not too badly, against
any possible prior distribution. The frequentist aims for universally
acceptable conclusions, ones that will stand up to adversarial scrutiny. 
\end{quote}
On one hand, methodology reflecting extreme caution in the form of
the minimax-like attitude attributed to frequentists and, on the other
hand, methodology reflecting the extreme reliance on modeling assumptions
attributed to Bayesians both play useful roles in statistical inference.
Building on that premise, the idea motivating this paper is that methodology
for moderate amounts of caution also has a place in practical data
analysis. The extent of such caution will be formally defined in order
to facilitate making statistical inferences at the level of caution
appropriate to the situation. 

The formal definition will build on previous work to formalize caution
in the face of uncertainty. Attitudes toward uncertainty have long
been mathematically modeled in the economics literature. \citet{Ellsberg1961b}
identified two distinct types of uncertainty:\emph{ risk} is the variability
in an unknown quantity that threatens assets, whereas \emph{ambiguity}
is ignorance about the extent of such variability. The same agent
may be much more cautious toward risk than toward ambiguity or vice
versa. A utility or loss function can model an agent's attitude toward
risk but not its attitude toward ambiguity. Because frequentist actions
can differ from Bayesian actions given the same loss function, the
ambiguity attitude is much more pertinent than the risk attitude to
the concept of caution needed to represent and balance the two basic
approaches to statistical inference.

\citet{Ellsberg1961b} distinguished {}``pessimism'' from {}``conservatism'':
the former is an excessive belief that worst-case scenarios will materialize,
whereas the latter only involves cautiously acting as if they will.
In other words, the attitude of {}``hoping for the best, preparing
for the worst'' is consistent with conservatism but not pessimism.
While that attitude does motivate much of frequentist statistics,
{}``conservatism'' already has technical meanings in the statistics
literature, e.g., conservative confidence intervals have higher-than-nominal
coverage rates. For that reason, the term {}``caution'' will be
used when assigning an operational definition to the degree of conservatism
toward ambiguity in the sense of \citet{Ellsberg1961b}. 

In an example from \citet{Ellsberg1961b}, a ball is randomly drawn
from an urn of 90 balls, each of one of three possible colors: red,
black, and yellow. Nothing is known about the distributions of the
balls in the urn except that exactly 30 are red. Thus, there is ambiguity
in the distribution of black and yellow balls. The agent would gain
a reward of \$0 or \$100 based on its taking action I or action II
according to utility function displayed as Table \ref{tab:Utility1}
in setting 1. In setting 2, the agent would instead gain \$0 or \$100
based on its taking action III or action IV according to the utility
function displayed as Table \ref{tab:Utility2}. Agents cautious toward
ambiguity would choose action I over action II in setting 1 but would
take action IV over action III in setting 2, against subjective Bayesian
concepts of coherence but without requiring the extreme caution of
a minimax strategy \citep{Ellsberg1961b}. 

\begin{table}
\begin{tabular}{|c|c|c|c|}
\hline 
 & Red drawn & Black drawn & Yellow drawn\tabularnewline
\hline 
\hline 
action I & \$100 & \$0 & \$0\tabularnewline
\hline 
action II & \$0 & \$100 & \$0\tabularnewline
\hline 
\end{tabular}

\caption{\label{tab:Utility1}Utility function for actions I-II and the three
possible states of nature.}
\end{table}
\begin{table}
\begin{tabular}{|c|c|c|c|}
\hline 
 & Red drawn & Black drawn & Yellow drawn\tabularnewline
\hline 
\hline 
action III & \$100 & \$0 & \$100\tabularnewline
\hline 
action IV & \$0 & \$100 & \$100\tabularnewline
\hline 
\end{tabular}

\caption{\label{tab:Utility2}Utility function for actions III-IV and the three
possible states of nature.}
\end{table}

In the absence of ambiguity, the axiomatic system of \citet[§3.6]{MaxUtility1944}
and later generalizations prescribe choosing the action that maximizes
expected utility. By forcefully applying such a system to conditional
expectations given observed data, \citet{RefWorks:126} revitalized
Bayesian statistics. The action that maximizes expected utility with
respect to a Bayesian posterior is called the \emph{posterior Bayes
action}. Ambiguity about the posterior is usually modeled in terms
of a set $\dot{\mathcal{P}}$ of multiple posteriors in place of a
single posterior. A multiplicity of posteriors may arise from insufficient
elicitation of subjective prior opinions \citep[e.g.,][]{RefWorks:16},
from a spread in a gamble's buying and selling prices \citep[e.g.,][]{RefWorks:Walley1991},
or, more objectively, from ignorance as to which prior distribution
in a set describes the physical variability of a parameter. The last
source accords best with the notion of ambiguity as used in \citet{Ellsberg1961b},
\citet{RefWorks:1612}, \citet{JAFFRAY1989}, and \citet{RefWorks:1618}. 

In the Bayesian statistics literature, the most studied decision-theoretic
approach for sets of priors is the (marginal) $\Gamma$-minimax strategy
\citep[e.g.,][]{RefWorks:16}, which formulates the problem in terms
of minimax risk in the frequentist sense of \citet{Wald1950}. The
closely related conditional $\Gamma$-minimax strategy \citep[e.g.,][]{Betro1992b}
takes the action that minimizes the expected loss maximized over all
of the posterior distributions in a set $\dot{\mathcal{P}}$, each
member of which corresponds to a prior distribution in a set traditionally
denoted by $\Gamma$. That statistical strategy is a special case
of the maxmin expected utility strategy \citep{Hurwicz1951a,ISI:A1989AL55500003},
which takes the action that maximizes the expected utility minimized
over a set of distributions. Both {}``robust Bayesian'' strategies
are reviewed in \citet{citeulike:7129735}.

The following equation extends the conditional $\Gamma$-minimax strategy
to the problem of conducting statistical inference at a specified
degree of \emph{caution} $\kappa$ and with respect to a Bayesian
posterior $\dot{P}\in\dot{\mathcal{P}}$ that is not generally the
true physical distribution of the parameter $\theta$. For any $\kappa\in\left[0,1\right]$,
the \emph{$\kappa$-conditional-$\Gamma$ ($\kappa$CG) action }is
defined as 
\begin{equation}
\dot{a}_{\kappa}=\arg\inf_{a\in\mathcal{A}}\left(\kappa\sup_{P^{\prime}\in\dot{\mathcal{P}}}\int L\left(\theta,a\right)dP^{\prime}\left(\theta\right)+\left(1-\kappa\right)\int L\left(\theta,a\right)d\dot{P}\left(\theta\right)\right),\label{eq:CG}
\end{equation}
with the conventions that $\kappa\times\infty=0$ if $\kappa=0$ and
$\left(1-\kappa\right)\times\infty=0$ if $\kappa=1$. The $\kappa$CG
action reduces to the conditional $\Gamma$-minimax action under complete
caution $\left(\kappa=1\right)$ and to the posterior Bayes action
in the complete absence of caution $\left(\kappa=0\right)$. For discrete
$\theta$, this $\kappa$ is isomorphic to quantities used by \citet{Ellsberg1961b},
\citet{RefWorks:1618}, and \citet{TAPKING2004} and is similar in
spirit to the quantity in \citet{Hurwicz1951} and \citet{RefWorks:1612}
that \citet{ISI:000173555100002} calls {}``caution.'' \citet{RefWorks:1618}
stressed the equivalent of the rearrangement of equation \eqref{eq:CG}
as
\begin{equation}
\dot{a}_{\kappa}=\arg\inf_{a\in\mathcal{A}}\left(\sup_{P\in\left\{ \kappa P^{\prime}+\left(1-\kappa\right)\dot{P}:P^{\prime}\in\dot{\mathcal{P}}\right\} }\int L\left(\theta,a\right)dP\left(\theta\right)\right).\label{eq:CG-mixture}
\end{equation}

The $\kappa$CG strategy has two drawbacks that will prevent its use
in many applications. First, under standard loss functions, the conditional
$\Gamma$-minimax (1CG) strategy requires either that $\dot{\mathcal{P}}$
impose strict bounds on the parameter space \citep{AbdollahBayati2011}
or that $\mathcal{A}$ be severely restricted \citep{Betro1992b},
and the $\kappa$CG strategy with $0<\kappa<1$ has the same limitation.
Second, since the 1CG strategy is not necessarily a frequentist procedure,
the $\kappa$CG framework does not fulfill the above goal of formulating
procedures that reduce to frequentist procedures given complete caution.

Following the preliminary notation and definitions of Section \ref{sec:preliminaries},
an information-theoretic framework will be introduced in Section \ref{sec:General-theory}
to overcome the identified limitations of the $\kappa$CG framework.
Simple examples demonstrating the wide applicability of the information-theoretic
framework will appear in Section \ref{sec:Examples}. Further generality
will be carried out in Section \ref{sec:Variations} by exchanging
roles of frequentist and Bayesian procedures and by noting the particular
applications that call for each role exchange. Section \ref{sec:Discussion}
closes the paper with a brief discussion.

\section{\label{sec:preliminaries}Bayesian and frequentist posterior distributions}

\subsection{\label{sub:Preliminary-concepts}Preliminary concepts}

The observed data vector $x\in\mathcal{X}$ is modeled as a realization
of a random variable $X$ of probability space $\left(\mathcal{X},\mathfrak{X},P_{\theta_{\ast},\lambda_{\ast}}\right)$,
which for some parameter set $\Theta_{\ast}\times\Lambda_{\ast}$
is indexed by an \emph{interest parameter} $\theta_{\ast}\in\Theta_{\ast}$
and potentially also by a \emph{nuisance parameter} $\lambda_{\ast}\in\Lambda_{\ast}$.
The family $\left\{ P_{\theta_{\ast},\lambda_{\ast}}:\theta_{\ast}\in\Theta_{\ast},\lambda_{\ast}\in\Lambda_{\ast}\right\} $
will be called the \emph{sampling model} for $x$.

Inferences will be made about the \emph{focus parameter} $\theta=\theta\left(\theta_{\ast}\right)$,
a subparameter of the interest parameter, in a set $\Theta$. In the
simplest case, $\theta=\theta_{\ast}$ and $\Theta=\Theta_{\ast}$,
but there are many other possibilities. For example, when testing
the null hypothesis that $\theta_{\ast}=0$ against the alternative
hypothesis that $\theta_{\ast}\ne0$ for $\Theta_{\ast}=\mathbb{R}$,
it is convenient to define the focus parameter by $\theta=0$ if $\theta_{\ast}=0$
and $\theta=1$ if $\theta_{\ast}\ne0$, in which case $\Theta=\left\{ 0,1\right\} $.
Let $\mathcal{H}$ denote a $\sigma$-field that allows any physically
meaningful hypothesis about $\theta$ to be expressed as {}``$\theta$
is in $\Theta^{\dagger}$,'' where $\Theta^{\dagger}\in\mathcal{H}$.

\subsection{\label{sub:Bayesian-posteriors}Bayesian posteriors}

In the Bayesian setting, the above sampling model is understood as
conditional on the parameter values with respect to some prior distribution,
as follows. Every member $P_{\ast}^{\pri}$ of some set $\mathcal{P}_{\ast}^{\pri}$
is a distribution such that there is a random triple $\left\langle \dot{X},\dot{\theta}_{\ast},\dot{\lambda}_{\ast}\right\rangle \sim P_{\ast}^{\pri}$
and such that $P_{\theta_{\ast},\lambda_{\ast}}=P_{\ast}^{\pri}\left(\bullet\vert\dot{\theta}_{\ast}=\theta_{\ast},\dot{\lambda}_{\ast}=\lambda_{\ast}\right)$
for all $\theta_{\ast}\in\Theta_{\ast},\lambda_{\ast}\in\Lambda_{\ast}$.

Let $\mathcal{P}$ denote the set of all probability distributions
on $\left(\Theta,\mathcal{H}\right)$. Before observing data, knowledge
about the focus parameter is represented by $\mathcal{P}^{\pri}$,
the set of all distributions of $\left\langle \dot{X},\theta\left(\dot{\theta}_{\ast}\right)\right\rangle $
 for all $P_{\ast}^{\pri}\in\mathcal{P}_{\ast}^{\pri}$:
\[
\mathcal{P}^{\pri}=\left\{ P^{\pri}\in\mathcal{P}:\left\langle \dot{X},\theta\left(\dot{\theta}_{\ast}\right)\right\rangle \sim P^{\pri},\left\langle \dot{X},\dot{\theta}_{\ast},\dot{\lambda}_{\ast}\right\rangle \sim P_{\ast}^{\pri}\in\mathcal{P}_{\ast}^{\pri}\right\} .
\]
The marginal distribution of each $\theta\left(\dot{\theta}_{\ast}\right)$
is in $\mathcal{P}$ and is called a \emph{plausible prior }distribution
since it is consistent with pre-data knowledge. 

The Bayesian approach yields inferences about the focus parameter
on the basis of a single distribution, $\dot{P}^{\pri}\in\mathcal{P}^{\pri}$.
If $\left\langle \dot{X},\dot{\theta}\right\rangle \sim\dot{P}^{\pri}$,
then the \emph{working prior} distribution is the marginal distribution
of $\dot{\theta}$. It follows that the working prior is one of
the plausible priors.

The \emph{working Bayesian posterior} $\dot{P}$ and the \emph{knowledge
base} $\dot{\mathcal{P}}$ \citep{ISI:000224212800044} are defined
such that 
\[
\dot{P}=\dot{P}^{\pri}\left(\bullet\vert\dot{X}=x\right);
\]
\[
\dot{\mathcal{P}}=\left\{ P^{\prime}\left(\bullet\vert\dot{X}=x\right):P^{\prime}\in\mathcal{P}^{\pri}\right\} .
\]
$\dot{P}$ is simply the Bayesian posterior distribution corresponding
to the working prior, and $\dot{\mathcal{P}}$ is likewise the set
of Bayesian posteriors in $\mathcal{P}$ that correspond to plausible
prior distributions. To prevent confusion with $\dot{P}$, members
of $\dot{\mathcal{P}}$ will be referred to as \emph{plausible posteriors}
since they are the parameter distributions consistent with the mathematical
representation either of a physical system or of a belief system \citep[cf.][]{ISI:A1979GQ08900002,ISI:000224212800044}.
Thus, the posterior that would be used in purely Bayesian inference
is one of the plausible posteriors $\left(\dot{P}\in\dot{\mathcal{P}}\right)$.

\subsection{\label{sub:Confidence-posteriors}Confidence posteriors}

The sampling model of Section \ref{sub:Preliminary-concepts} admits
not only system constraints and Bayesian inference (§\ref{sub:Bayesian-posteriors})
but also frequentist inference in the form of confidence intervals
and p-values. Let $\mathcal{H}_{\ast}$ denote $\mathcal{B}\left(\Theta_{\ast}\right)$,
the Borel set of $\Theta_{\ast}$. Given some $\alpha\in\left[0,1\right]$,
if the function $\widehat{\Theta}\left(1-\alpha,\bullet\right):\mathcal{X}\rightarrow\mathcal{H}_{\ast}$
satisfies

\begin{equation}
P_{\theta_{\ast},\lambda_{\ast}}\left(\theta_{\ast}\in\widehat{\Theta}\left(1-\alpha;X\right)\right)=1-\alpha\label{eq:confidence-set}
\end{equation}
for all $\theta_{\ast}\in\Theta_{\ast}$ and $\lambda_{\ast}\in\Lambda_{\ast}$,
then $\widehat{\Theta}\left(1-\alpha;X\right)$ is called a $100\left(1-\alpha\right)\%$
\emph{confidence set} for $\theta_{\ast}$. Suppose $\widehat{\Theta}:\left[0,1\right]\times\mathcal{X}\rightarrow\Theta_{\ast}$
defines nested confidence sets in the sense that $\widehat{\Theta}\left(1-\alpha;X\right)$
is a $100\left(1-\alpha\right)\%$ confidence set for $\theta_{\ast}$
given any confidence level $1-\alpha\in\left[0,1\right]$ and such
that 
\[
0\le1-\alpha_{1}<1-\alpha_{2}\le1\implies\widehat{\Theta}\left(1-\alpha_{1};X\right)\subset\widehat{\Theta}\left(1-\alpha_{2};X\right)
\]
almost surely. A \emph{confidence posterior distribution} is a distribution
$\ddot{P}_{\ast}$ on $\left(\Theta_{\ast},\mathcal{H}_{\ast}\right)$
for which
\begin{equation}
\ddot{P}_{\ast}\left(\Theta^{\dagger}\right)=\ddot{P}_{\ast}\left(\ddot{\theta}_{\ast}\in\Theta^{\dagger}\right)\in\left\{ 1-\alpha:\widehat{\Theta}\left(1-\alpha;x\right)=\Theta^{\dagger},0\le\alpha\le1\right\} \label{eq:confidence-level}
\end{equation}
for all $x\in\mathcal{X}$ and $\Theta^{\dagger}\in\mathcal{H}_{\ast}$,
where $\ddot{\theta}_{\ast}$ is a random variable of distribution
$\ddot{P}_{\ast}$. \citet{Polansky2007b} called $\ddot{P}_{\ast}\left(\Theta^{\dagger}\right)$
the \emph{observed confidence level} of the hypothesis that $\theta_{\ast}\in\Theta^{\dagger}$.
Confidence posteriors for which $\theta_{\ast}$ is a real scalar
$\left(\Theta_{\ast}\subseteq\mathbb{R}\right)$ and the $\sigma$-field
is Borel $\left(\mathcal{H}_{\ast}=\mathcal{B}\left(\Theta_{\ast}\right)\right)$
are usually called \emph{confidence distributions}, each of which
encodes confidence intervals of all confidence levels and hypothesis
tests of all simple null hypotheses \citep{Efron19933}. Various devices
extend confidence posteriors to cases in which their posterior probabilities
only approximately match confidence levels \citep{RefWorks:127,RefWorks:130,Polansky2007b,CoherentFrequentism}.

The identity between confidence posterior probabilities and levels
of confidence \eqref{eq:confidence-level} clears up the misunderstanding
that confidence levels and p-values cannot be interpreted as epistemological
probabilities of hypotheses given the observed data. In fact, since
$\ddot{P}_{\ast}$ is a Kolmogorov probability measure on parameter
space, decisions made using various loss functions by the confidence
posterior action
\[
\ddot{a}=\arg\inf_{a\in\mathcal{A}}\int L\left(\theta_{\ast},a\right)d\ddot{P}_{\ast}\left(\theta_{\ast}\right)
\]
for each loss function $L$ are coherent with each other in the senses
usually associated with Bayesian inference, whether or not $\ddot{P}_{\ast}$
can be derived from some prior via Bayes's theorem \citep{conditional2009,CoherentFrequentism}.

Let $\ddot{\mathcal{P}}_{\ast}$ denote the set of confidence posteriors
on $\left(\Theta_{\ast},\mathcal{H}_{\ast}\right)$ that are under
consideration. For example, $\ddot{\mathcal{P}}_{\ast}$ could be
the set of a single confidence posterior, the set of all distributions
on $\left(\Theta_{\ast},\mathcal{H}_{\ast}\right)$ that satisfy equation
\eqref{eq:confidence-level}, or, as in \citet{CoherentFrequentism},
the set of two approximate confidence posteriors or the convex set
of all mixtures of the two.

The set $\ddot{\mathcal{P}}$ will represent the set of distributions
of $\theta\left(\ddot{\theta}_{\ast}\right)$ for all $\ddot{P}_{\ast}\in\ddot{\mathcal{P}}_{\ast}$:
\[
\ddot{\mathcal{P}}=\left\{ P^{\prime\prime}\in\mathcal{P}:\theta\left(\ddot{\theta}_{\ast}\right)\sim P^{\prime\prime},\ddot{\theta}_{\ast}\sim\ddot{P}_{\ast}\in\ddot{\mathcal{P}}_{\ast}\right\} .
\]
Thus, for any $\ddot{P}_{\ast}\in\ddot{\mathcal{P}}_{\ast}$, there
is a random parameter $\ddot{\theta}=\theta\left(\ddot{\theta}_{\ast}\right)$
of distribution $\ddot{P}\in\ddot{\mathcal{P}}$ such that $\ddot{P}\left(\ddot{\theta}\in\left\{ \theta\left(\theta_{\ast}^{\prime\prime}\right):\theta_{\ast}^{\prime\prime}\in\Theta_{\ast}^{\dagger}\right\} \right)=\ddot{P}_{\ast}\left(\ddot{\theta}_{\ast}\in\Theta^{\dagger}\right)$
for all $\Theta_{\ast}^{\dagger}\in\mathcal{H}_{\ast}$. $\ddot{\mathcal{P}}$
will be considered as a set of confidence posterior distributions
of the focus parameter even though more literally they are not necessarily
confidence posteriors but rather fiducial-like distributions derived
from the set $\ddot{\mathcal{P}}_{\ast}$ of confidence posteriors
by the laws of probability. (\citet{RefWorks:1175} provides a recent
review of fiducial inference.) In the simplest case of $\ddot{\theta}=\ddot{\theta}_{\ast}$,
$\left(\Theta,\mathcal{H}\right)=\left(\Theta_{\ast},\mathcal{H}_{\ast}\right)$
and $\ddot{\mathcal{P}}=\ddot{\mathcal{P}}_{\ast}$. While confidence
distributions are used here for concreteness, $\ddot{\mathcal{P}}$
can be a set of any distributions on $\left(\Theta,\mathcal{H}\right)$
to use as benchmarks with respect to which the posterior introduced
in the next section is intended as an improvement.

\section{\label{sec:General-theory}Framework of moderate inference}

\subsection{Moderate posteriors}

Let $P$ and $Q$ denote probability distributions on $\left(\Theta,\mathcal{H}\right)$.
The \emph{information divergence} \emph{of $P$ with respect to $Q$}
is defined as
\begin{equation}
I\left(P||Q\right)=\int dP\log\left(\frac{dP}{dQ}\right)\label{eq:cross-entropy}
\end{equation}
if $Q$ is absolutely continuous with respect to $P$ and $I\left(P||Q\right)=\infty$
if not, where $0\log\left(0\right)=0$ and $0\log\left(0/0\right)=0$.
$I\left(P||Q\right)$ goes by many names in literature, including
{}``Kullback-Leibler information'' and {}``cross entropy.'' Viewing
$I\left(P||Q\right)$ as information leads to the concept of how much
information for statistical inference would be gained by replacing
a confidence posterior $P^{\prime\prime}\in\ddot{\mathcal{P}}$ with
another posterior $Q\in\mathcal{P}$ if the plausible posterior $P^{\prime}\in\dot{\mathcal{P}}$
were the physical distribution of the parameter $\theta$. Specifically,
\[
I\left(P^{\prime}||P^{\prime\prime}\rightsquigarrow Q\right)=I\left(P^{\prime}||P^{\prime\prime}\right)-I\left(P^{\prime}||Q\right),
\]
as a special case of {}``information gain'' \citep{Phuber77}, is
called the \emph{inferential gain} \emph{of} $Q$ \emph{relative to}
$P^{\prime\prime}$ \emph{given} $P^{\prime}$ \citep{continuum}.
(The $\rightsquigarrow$ notation is borrowed from \citet{Topsoe2007b}.)

In analogy with equation \eqref{eq:CG-mixture}, the \emph{caution}
$\kappa\in\left[0,1\right]$ is then the extent to which a {}``worst-case''
plausible posterior $P^{\prime}\in\dot{\mathcal{P}}$ is used for
inference as opposed to the working Bayesian posterior $\dot{P}$
in this definition of the \emph{$\kappa$-inferential gain of }$Q$
\emph{relative to} $P^{\prime\prime}$ \emph{given} $P^{\prime}$
\emph{and }$\dot{P}$:

\begin{equation}
I\left(P^{\prime},\dot{P}||P^{\prime\prime}\rightsquigarrow Q;\kappa\right)=I\left(\kappa P^{\prime}+\left(1-\kappa\right)\dot{P}||P^{\prime\prime}\rightsquigarrow Q\right).\label{eq:inferential-utility}
\end{equation}
The posterior distribution that has the highest $\kappa$-inferential
gain in the following sense will be used for making inferences and
decisions. The \emph{moderate posterior distribution with caution
$\kappa$ relative to} $\ddot{\mathcal{P}}$ \emph{given} $\dot{\mathcal{P}}$
\emph{and }$\dot{P}$ is denoted by $\widetilde{P}_{\kappa}$ and
defined by
\begin{equation}
\inf_{P^{\prime\prime}\in\ddot{\mathcal{P}}}\inf_{P^{\prime}\in\dot{\mathcal{P}}}I\left(P^{\prime},\dot{P}||P^{\prime\prime}\rightsquigarrow\widetilde{P}_{\kappa};\kappa\right)=\inf_{P^{\prime\prime}\in\ddot{\mathcal{P}}}\sup_{Q\in\mathcal{P}}\inf_{P^{\prime}\in\dot{\mathcal{P}}}I\left(P^{\prime},\dot{P}||P^{\prime\prime}\rightsquigarrow Q;\kappa\right).\label{eq:moderate-posterior}
\end{equation}
Less technically, $\widetilde{P}_{\kappa}$ is the posterior distribution
that maximizes the worst-case inferential gain relative to the confidence
posterior $P^{\prime\prime}$, which is in turn chosen to minimize
the maximum worst-case gain. In the case that equation \eqref{eq:moderate-posterior}
does not have a unique solution, the moderate posterior is defined
to be as close as possible to the working Bayesian posterior: 
\begin{equation}
\widetilde{P}_{\kappa}=\arg\inf_{P^{\prime\prime\prime}\in\widetilde{\mathcal{P}}_{\kappa}}I\left(\dot{P}||P^{\prime\prime\prime}\right),\label{eq:moderate-posterior-unique}
\end{equation}
where the set $\widetilde{\mathcal{P}}_{\kappa}$ of \emph{candidate
moderate posteriors} is defined as the set of all distributions in
$\mathcal{P}$ such that every member of $\widetilde{\mathcal{P}}_{\kappa}$
solves equation \eqref{eq:moderate-posterior}. By letting  
\begin{equation}
J\left(\dot{P}||P^{\prime\prime}\rightsquigarrow Q;\kappa\right)=\inf_{P^{\prime}\in\dot{\mathcal{P}}}I\left(P^{\prime},\dot{P}||P^{\prime\prime}\rightsquigarrow Q;\kappa\right)\label{eq:worst-case-gain}
\end{equation}
for any $P^{\prime\prime}\in\ddot{\mathcal{P}}$, that set may be
written as
\begin{equation}
\widetilde{\mathcal{P}}_{\kappa}=\left\{ P\in\mathcal{P}:\inf_{P^{\prime\prime}\in\ddot{\mathcal{P}}}J\left(\dot{P}||P^{\prime\prime}\rightsquigarrow P;\kappa\right)=\inf_{P^{\prime\prime}\in\ddot{\mathcal{P}}}\sup_{Q\in\mathcal{P}}J\left(\dot{P}||P^{\prime\prime}\rightsquigarrow Q;\kappa\right)\right\} .\label{eq:candidate-MP-set}
\end{equation}
The \emph{moderate posterior action with caution $\kappa$} is
\begin{equation}
\widetilde{a}_{\kappa}=\arg\inf_{a\in\mathcal{A}}\int L\left(\theta,a\right)d\widetilde{P}_{\kappa}\left(\theta\right),\label{eq:IP}
\end{equation}
which defines making decisions on the basis of the moderate posterior
as taking actions that minimize its expected loss. For example, if
$\ddot{P}$ is the only confidence posterior under consideration,
then $\ddot{\mathcal{P}}=\left\{ \ddot{P}\right\} $ and 

\begin{eqnarray}
 & \widetilde{P}_{\kappa}=\arg\sup_{Q\in\mathcal{P}}\left(\inf_{P\in\dot{\mathcal{P}}_{\kappa}}I\left(P||\ddot{P}\rightsquigarrow Q\right)\right);\label{eq:MP-single-CP}
\end{eqnarray}
\begin{equation}
\dot{\mathcal{P}}_{\kappa}=\left\{ \kappa P^{\prime}+\left(1-\kappa\right)\dot{P}:P^{\prime}\in\dot{\mathcal{P}}\right\} ,\label{eq:revealed-posteriors}
\end{equation}
which recalls equation \eqref{eq:CG-mixture}. Since $\dot{P}\in\dot{\mathcal{P}}_{\kappa}\subseteq\dot{\mathcal{P}}$,
$\dot{\mathcal{P}}_{0}=\left\{ \dot{P}\right\} $, and $\dot{\mathcal{P}}_{1}=\dot{\mathcal{P}}$,
the effect of $\kappa<1$ as opposed to $\kappa=1$ is to replace
the knowledge base $\dot{\mathcal{P}}$ with a subset $\dot{\mathcal{P}}_{\kappa}$
containing the working Bayesian posterior $\dot{P}$ \citep[cf.][]{RefWorks:1618}.

The two extreme cases of caution reduce decision making to previous
frameworks. A complete lack of caution $\left(\kappa=0\right)$ leads
to the sole use of the working Bayesian posterior for the minimization
of posterior expected loss: $\widetilde{P}_{0}=\dot{P}$. On the other
hand, complete caution $\left(\kappa=1\right)$ leads to ignoring
the working Bayesian posterior and, in the case of a single confidence
posterior, to the framework of \citet{continuum}, in which $\widetilde{P}_{1}$
is called the \emph{blended posterior}.

\subsection{Minimum information divergence}

The following fact is from \citet{ISI:A1979GQ08900002}; see also
\citet{Phuber77}, \citet{Harremoes2007}, \citet{continuum}, and
especially \citet{Topsoe2007b}.
\begin{lem}
\label{lem:equilibrium}Given a distribution $\ddot{P}$ on $\left(\Theta,\mathcal{H}\right)$,
if $\dot{\mathcal{P}}$ is convex and $I\left(P||\ddot{P}\right)<\infty$
for all $P\in\dot{\mathcal{P}}$, then 
\[
\sup_{Q\in\mathcal{P}}\inf_{P^{\prime}\in\dot{\mathcal{P}}}I\left(P^{\prime}||\ddot{P}\rightsquigarrow Q\right)=\inf_{Q\in\dot{\mathcal{P}}}I\left(Q||\ddot{P}\right).
\]

\end{lem}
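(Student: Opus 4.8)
The plan is to read the equality as a saddle-value identity and prove the two inequalities separately, the whole argument turning on the observation that the inferential gain is \emph{affine} in its first argument. With $P'' = \ddot{P}$, I would first rewrite
\[
I(P' || \ddot{P} \rightsquigarrow Q) = I(P' || \ddot{P}) - I(P' || Q) = \int \log\left(\frac{dQ}{d\ddot{P}}\right) dP',
\]
which is legitimate whenever $P' \ll \ddot{P}$ (guaranteed by the hypothesis $I(P'||\ddot{P}) < \infty$) and $P' \ll Q$ (when $P' \not\ll Q$ the gain is $-\infty$, so such $Q$ never matter to the outer supremum). The point is that the last expression is linear in $P'$, so the inner infimum is that of a linear functional over the convex set $\dot{\mathcal{P}}$. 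Write $m = \inf_{Q_0 \in \dot{\mathcal{P}}} I(Q_0 || \ddot{P})$ for the target right-hand side.

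For the inequality ``$\le$'' I would use only nonnegativity of the divergence, with no appeal to convexity. Fix any $Q \in \mathcal{P}$ and choose $Q_n \in \dot{\mathcal{P}}$ with $I(Q_n || \ddot{P}) \to m$. Evaluating the inner infimum at the admissible point $P' = Q_n$ and using $I(Q_n || \ddot{P} \rightsquigarrow Q) = I(Q_n || \ddot{P}) - I(Q_n || Q) \le I(Q_n || \ddot{P})$ gives $\inf_{P' \in \dot{\mathcal{P}}} I(P' || \ddot{P} \rightsquigarrow Q) \le I(Q_n || \ddot{P})$; letting $n \to \infty$ yields $\inf_{P'} I(P' || \ddot{P} \rightsquigarrow Q) \le m$. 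Since this holds for every $Q$, taking the supremum over $Q \in \mathcal{P}$ delivers ``$\le$''.

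For the reverse inequality I would exhibit a single good competitor in the outer supremum, namely the information projection $Q^{\ast} = \arg\inf_{Q_0 \in \dot{\mathcal{P}}} I(Q_0 || \ddot{P})$ of $\ddot{P}$ onto $\dot{\mathcal{P}}$, whose existence follows from convexity of $\dot{\mathcal{P}}$ and the finiteness hypothesis. The decisive tool is Csisz\'ar's Pythagorean inequality for $I$-projections onto a convex set: for every $P' \in \dot{\mathcal{P}}$,
\[
I(P' || \ddot{P}) \ge I(P' || Q^{\ast}) + I(Q^{\ast} || \ddot{P}).
\]
Rearranged, this says $I(P' || \ddot{P} \rightsquigarrow Q^{\ast}) = I(P' || \ddot{P}) - I(P' || Q^{\ast}) \ge I(Q^{\ast} || \ddot{P}) = m$ for every $P' \in \dot{\mathcal{P}}$. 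Taking the infimum over $P'$ and then using $Q^{\ast} \in \dot{\mathcal{P}} \subseteq \mathcal{P}$ as a feasible point of the outer supremum gives $\sup_{Q} \inf_{P'} I(P' || \ddot{P} \rightsquigarrow Q) \ge m$, which combined with the previous paragraph closes the proof.

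The step I expect to be the real obstacle is the Pythagorean inequality, specifically the attainment of $Q^{\ast}$ \emph{inside} $\dot{\mathcal{P}}$: Csisz\'ar's theorem needs more than bare convexity (typically closedness in total variation) to guarantee the minimizer is a member of $\dot{\mathcal{P}}$ rather than merely a limit point. Here, however, the outer supremum ranges over all of $\mathcal{P}$ and not just $\dot{\mathcal{P}}$, which is exactly the slack I would exploit: even when the projection escapes $\dot{\mathcal{P}}$, the generalized $I$-projection of Csisz\'ar--Mat\'u\v{s} is still a genuine element of $\mathcal{P}$ and satisfies the generalized Pythagorean bound $I(P' || \ddot{P}) \ge I(P' || Q^{\ast}) + m$ for all $P' \in \dot{\mathcal{P}}$, so it remains an admissible competitor for the supremum. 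The only remaining work is routine bookkeeping of absolute-continuity and finiteness so that the additive splitting of the logarithm used throughout is valid.
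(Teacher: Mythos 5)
The paper does not actually prove this lemma; it is imported as a known fact from Topsøe (1979) and related references, so there is no internal proof to compare against. Your argument is correct and is essentially the standard one underlying those sources: the ``$\le$'' direction needs only $I(Q_n||Q)\ge 0$ evaluated at near-minimizers $Q_n\in\dot{\mathcal{P}}$, and the ``$\ge$'' direction follows by playing the (generalized) $I$-projection $Q^{\ast}$ of $\ddot{P}$ onto $\dot{\mathcal{P}}$ and invoking Csisz\'ar's Pythagorean inequality $I(P'||\ddot{P})\ge I(P'||Q^{\ast})+m$. You correctly identify the one genuine subtlety --- that bare convexity does not guarantee the minimizer lies in $\dot{\mathcal{P}}$ --- and correctly resolve it by noting that the generalized $I$-projection exists as a bona fide element of $\mathcal{P}$ whenever $m<\infty$ (guaranteed here by the finiteness hypothesis) and still satisfies the Pythagorean bound, which is all the outer supremum over $\mathcal{P}$ requires. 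The opening remark about affinity of the gain in $P'$ is accurate but is not actually used in either direction of your argument; it would matter only for a Sion-minimax route, which you do not take. Nothing is missing.
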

The resulting theorem is useful for finding $\widetilde{P}_{\kappa}$: 
\begin{thm}
\label{thm:equilibrium}For any $\kappa\in\left[0,1\right]$, if $\dot{\mathcal{P}}$
is convex and $I\left(P^{\prime}||P^{\prime\prime}\right)<\infty$
for all $P^{\prime}\in\dot{\mathcal{P}}$ and $P^{\prime\prime}\in\ddot{\mathcal{P}}$,
then the moderate posterior $\widetilde{P}_{\kappa}$ is given by
equations \eqref{eq:moderate-posterior-unique} and \eqref{eq:revealed-posteriors}
with
\begin{equation}
\widetilde{\mathcal{P}}_{\kappa}=\left\{ P\in\mathcal{P}:\inf_{P^{\prime\prime}\in\ddot{\mathcal{P}}}I\left(P||P^{\prime\prime}\right)=\inf_{P^{\prime\prime}\in\ddot{\mathcal{P}}}\inf_{Q\in\dot{\mathcal{P}}_{\kappa}}I\left(Q||P^{\prime\prime}\right)\right\} .\label{eq:candidate-MPs}
\end{equation}
\end{thm}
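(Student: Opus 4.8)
The plan is to reduce the defining relation \eqref{eq:candidate-MP-set} to \eqref{eq:candidate-MPs} by applying Lemma~\ref{lem:equilibrium} to the mixture family $\dot{\mathcal{P}}_{\kappa}$ in place of $\dot{\mathcal{P}}$. First I would record two structural facts. Since $\dot{\mathcal{P}}$ is convex and $\dot{\mathcal{P}}_{\kappa}=\{\kappa P'+(1-\kappa)\dot{P}:P'\in\dot{\mathcal{P}}\}$ is its image under the affine map $P'\mapsto\kappa P'+(1-\kappa)\dot{P}$, the set $\dot{\mathcal{P}}_{\kappa}$ is itself convex. Because $\dot{P}\in\dot{\mathcal{P}}$ and $I(\cdot\,\|\,P'')$ is convex in its first argument, every $R=\kappa P'+(1-\kappa)\dot{P}\in\dot{\mathcal{P}}_{\kappa}$ satisfies $I(R\,\|\,P'')\le\kappa I(P'\,\|\,P'')+(1-\kappa)I(\dot{P}\,\|\,P'')<\infty$, so the finiteness hypothesis of Lemma~\ref{lem:equilibrium} transfers from $\dot{\mathcal{P}}$ to $\dot{\mathcal{P}}_{\kappa}$ (the boundary cases $\kappa=0,1$ being immediate, so the $\kappa\times\infty$ conventions are never triggered).

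Next I would rewrite $J$. As $P'$ ranges over $\dot{\mathcal{P}}$ the mixture $\kappa P'+(1-\kappa)\dot{P}$ ranges over $\dot{\mathcal{P}}_{\kappa}$, so by \eqref{eq:inferential-utility} and \eqref{eq:worst-case-gain} one has $J(\dot{P}\,\|\,P''\rightsquigarrow Q;\kappa)=\inf_{R\in\dot{\mathcal{P}}_{\kappa}}I(R\,\|\,P''\rightsquigarrow Q)$. This is exactly the inner object of Lemma~\ref{lem:equilibrium} with $\dot{\mathcal{P}}$ replaced by $\dot{\mathcal{P}}_{\kappa}$ and $\ddot{P}$ by $P''$; the two facts above license its application, giving $\sup_{Q\in\mathcal{P}}J(\dot{P}\,\|\,P''\rightsquigarrow Q;\kappa)=\inf_{Q\in\dot{\mathcal{P}}_{\kappa}}I(Q\,\|\,P'')$ for each fixed $P''\in\ddot{\mathcal{P}}$. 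Taking $\inf_{P''\in\ddot{\mathcal{P}}}$ of both sides turns the right-hand side of the membership relation in \eqref{eq:candidate-MP-set} into the right-hand side of \eqref{eq:candidate-MPs}, namely the common value $v:=\inf_{P''\in\ddot{\mathcal{P}}}\inf_{Q\in\dot{\mathcal{P}}_{\kappa}}I(Q\,\|\,P'')$.

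The remaining and most delicate step is the left-hand side. Here I would exploit the identity $I(R\,\|\,P'')-I(R\,\|\,P)=\int\log(dP/dP'')\,dR$, which shows that $R\mapsto I(R\,\|\,P''\rightsquigarrow P)$ is \emph{affine} on $\dot{\mathcal{P}}_{\kappa}$, so that $J(\dot{P}\,\|\,P''\rightsquigarrow P;\kappa)$ is the infimum of a linear functional over the convex set $\dot{\mathcal{P}}_{\kappa}$. Writing $Q^{*}_{P''}$ for the $I$-projection $\arg\inf_{Q\in\dot{\mathcal{P}}_{\kappa}}I(Q\,\|\,P'')$, the Pythagorean inequality $I(R\,\|\,P'')\ge I(R\,\|\,Q^{*}_{P''})+I(Q^{*}_{P''}\,\|\,P'')$ underlying Lemma~\ref{lem:equilibrium} identifies $Q^{*}_{P''}$ as the unique maximizer of $Q\mapsto J(\dot{P}\,\|\,P''\rightsquigarrow Q;\kappa)$ and pins the attained value at $I(Q^{*}_{P''}\,\|\,P'')$. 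I would then combine this across $P''$ to connect a distribution attaining $\inf_{P''}J(\dot{P}\,\|\,P''\rightsquigarrow P;\kappa)=v$ with the condition $\inf_{P''}I(P\,\|\,P'')=v$ that $P$ sit at minimal divergence from $\ddot{\mathcal{P}}$.

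The hard part is precisely this last reconciliation of the two left-hand sides, together with the passage through the outer $\inf_{P''\in\ddot{\mathcal{P}}}$. The subtlety is that the affine functional need not be minimized at $P$ itself, so the level-set description in \eqref{eq:candidate-MPs} can in principle be strictly larger than the set cut out by \eqref{eq:candidate-MP-set} (it may include distributions lying at divergence $v$ on the far side of $\ddot{\mathcal{P}}$); what rescues the statement is that the selection \eqref{eq:moderate-posterior-unique} then picks, among all candidates at divergence $v$, the one nearest the working posterior $\dot{P}$, and I expect this to return the projection-type candidate, hence the same $\widetilde{P}_{\kappa}$, under both descriptions. I would therefore close by verifying that $\arg\inf_{P'''\in\widetilde{\mathcal{P}}_{\kappa}}I(\dot{P}\,\|\,P''')$ lands on that candidate, paying attention to whether the infima over $\dot{\mathcal{P}}_{\kappa}$ and over $\ddot{\mathcal{P}}$ are attained and supplying a minimizing-sequence argument where they are not.
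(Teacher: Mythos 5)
Your core argument is the same as the paper's: rewrite $J\left(\dot{P}||P^{\prime\prime}\rightsquigarrow Q;\kappa\right)$ as $\inf_{R\in\dot{\mathcal{P}}_{\kappa}}I\left(R||P^{\prime\prime}\rightsquigarrow Q\right)$, apply Lemma \ref{lem:equilibrium} with $\dot{\mathcal{P}}_{\kappa}$ in place of $\dot{\mathcal{P}}$ to get $\sup_{Q\in\mathcal{P}}J\left(\dot{P}||P^{\prime\prime}\rightsquigarrow Q;\kappa\right)=\inf_{Q\in\dot{\mathcal{P}}_{\kappa}}I\left(Q||P^{\prime\prime}\right)$ for each fixed $P^{\prime\prime}$, and then take $\inf_{P^{\prime\prime}\in\ddot{\mathcal{P}}}$. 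You add two checks the paper leaves implicit but genuinely needs --- that $\dot{\mathcal{P}}_{\kappa}$, as the affine image of the convex set $\dot{\mathcal{P}}$, is convex, and that the finiteness hypothesis transfers to the mixtures via convexity of $I\left(\cdot||P^{\prime\prime}\right)$ in its first argument --- and both verifications are correct and worth making explicit.

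Where you stop short is the final reconciliation of the two membership conditions, and you should be aware that this is exactly the point at which the paper's own proof says only that equation \eqref{eq:candidate-MP-set} ``thereby reduces to'' equation \eqref{eq:candidate-MPs}, with no further argument. You have correctly diagnosed the issue: the equality of the right-hand sides does not by itself equate the sets, since \eqref{eq:candidate-MP-set} asks that $P$ attain $\sup_{Q}J$ (which, by the Pythagorean inequality you cite, forces $P$ to be an $I$-projection of some $P^{\prime\prime}$ onto $\dot{\mathcal{P}}_{\kappa}$), whereas \eqref{eq:candidate-MPs} is a level set of $P\mapsto\inf_{P^{\prime\prime}}I\left(P||P^{\prime\prime}\right)$ over all of $\mathcal{P}$ and can be strictly larger. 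Your proposed repair --- showing that the tie-breaking rule \eqref{eq:moderate-posterior-unique} selects the same distribution from either set --- is the right idea (in the binary and single-benchmark cases one can check that the projection is indeed the level-set element nearest $\dot{P}$), but as written it remains an announced intention rather than a proof, so your attempt is incomplete at precisely the one step the paper also does not supply. If you carry out that verification, your write-up would in fact be more rigorous than the published proof.
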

\begin{proof}
For any $\kappa\in\left[0,1\right]$ and $P^{\prime\prime}\in\ddot{\mathcal{P}}$,
equations \eqref{eq:worst-case-gain} and \eqref{eq:MP-single-CP},
with Lemma \ref{lem:equilibrium}, imply
\begin{eqnarray*}
\sup_{Q\in\mathcal{P}}J\left(\dot{P}||P^{\prime\prime}\rightsquigarrow Q;\kappa\right) & = & \sup_{Q\in\mathcal{P}}\inf_{P\in\dot{\mathcal{P}}_{\kappa}}I\left(P||P^{\prime\prime}\rightsquigarrow Q\right),\\
 & = & \inf_{Q\in\dot{\mathcal{P}}_{\kappa}}I\left(Q||P^{\prime\prime}\right)
\end{eqnarray*}
and thus
\[
\inf_{P^{\prime\prime}\in\ddot{\mathcal{P}}}\sup_{Q\in\mathcal{P}}J\left(\dot{P}||P^{\prime\prime}\rightsquigarrow Q;\kappa\right)=\inf_{P^{\prime\prime}\in\ddot{\mathcal{P}}}\inf_{Q\in\dot{\mathcal{P}}_{\kappa}}I\left(Q||P^{\prime\prime}\right).
\]
Equation \eqref{eq:candidate-MP-set} thereby reduces to equation
\eqref{eq:candidate-MPs}.
\end{proof}
An immediate consequence is 
\begin{cor}
\label{cor:single-MP}Given $\ddot{\mathcal{P}}=\left\{ \ddot{P}\right\} $,
if $\dot{\mathcal{P}}$ is convex and $I\left(P^{\prime}||\ddot{P}\right)<\infty$
for all $P^{\prime}\in\dot{\mathcal{P}}$, then the moderate posterior
is
\begin{equation}
\widetilde{P}_{\kappa}=\arg\inf_{Q\in\dot{\mathcal{P}}_{\kappa}}I\left(Q||\ddot{P}\right).\label{eq:maxent-sho}
\end{equation}

\end{cor}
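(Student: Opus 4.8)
The plan is to specialize the machinery behind Theorem \ref{thm:equilibrium} to the singleton $\ddot{\mathcal{P}}=\left\{ \ddot{P}\right\} $ and then exhibit the optimizer explicitly. First I would observe that when $\ddot{\mathcal{P}}$ is a single point the hypothesis $I\left(P^{\prime}||P^{\prime\prime}\right)<\infty$ collapses to the stated $I\left(P^{\prime}||\ddot{P}\right)<\infty$, and the outer infimum over $P^{\prime\prime}\in\ddot{\mathcal{P}}$ in \eqref{eq:moderate-posterior} drops out. Equation \eqref{eq:MP-single-CP} then applies verbatim, so that $\widetilde{P}_{\kappa}$ is the argument attaining $\sup_{Q\in\mathcal{P}}\inf_{P\in\dot{\mathcal{P}}_{\kappa}}I\left(P||\ddot{P}\rightsquigarrow Q\right)$, with the worst-case inferential gain taken over the set $\dot{\mathcal{P}}_{\kappa}$ of \eqref{eq:revealed-posteriors}.

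Next I would verify that $\dot{\mathcal{P}}_{\kappa}$ is convex: being the image of the convex set $\dot{\mathcal{P}}$ under the affine map $P^{\prime}\mapsto\kappa P^{\prime}+\left(1-\kappa\right)\dot{P}$, it inherits convexity, and since $I\left(\cdot||\ddot{P}\right)$ is convex and finite at each $P^{\prime}\in\dot{\mathcal{P}}$ and at $\dot{P}\in\dot{\mathcal{P}}$, it is finite on all of $\dot{\mathcal{P}}_{\kappa}$. Lemma \ref{lem:equilibrium}, applied with $\dot{\mathcal{P}}_{\kappa}$ in the role of $\dot{\mathcal{P}}$, then identifies the value of the game as $\inf_{Q\in\dot{\mathcal{P}}_{\kappa}}I\left(Q||\ddot{P}\right)$, which is precisely the right-hand side of \eqref{eq:maxent-sho}. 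Because $I\left(\cdot||\ddot{P}\right)$ is strictly convex in its first argument, this infimum over the convex set $\dot{\mathcal{P}}_{\kappa}$ is attained at a single distribution $Q^{\ast}:=\arg\inf_{Q\in\dot{\mathcal{P}}_{\kappa}}I\left(Q||\ddot{P}\right)$.

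The crux is to show that $Q^{\ast}$ is not merely value-attaining within $\dot{\mathcal{P}}_{\kappa}$ but the unique maximizer of the worst-case gain over all of $\mathcal{P}$, so that the tie-breaking rule \eqref{eq:moderate-posterior-unique} becomes vacuous. Here I would use the elementary identity $I\left(R||\ddot{P}\right)-I\left(R||Q\right)=\int\log\left(dQ/d\ddot{P}\right)dR$, valid for $R\in\dot{\mathcal{P}}_{\kappa}$, to rewrite the inner infimum as $\inf_{R\in\dot{\mathcal{P}}_{\kappa}}\int\log\left(dQ/d\ddot{P}\right)dR$. Evaluating the integrand at the single point $R=Q^{\ast}$ yields the upper bound $I\left(Q^{\ast}||\ddot{P}\right)-I\left(Q^{\ast}||Q\right)=m-I\left(Q^{\ast}||Q\right)$, where $m=I\left(Q^{\ast}||\ddot{P}\right)$; hence for every $Q\neq Q^{\ast}$ the worst-case gain is at most $m-I\left(Q^{\ast}||Q\right)<m$. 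Conversely, the variational (Pythagorean) inequality for the information projection onto the convex set $\dot{\mathcal{P}}_{\kappa}$ — the Csisz\'ar-type fact underlying Lemma \ref{lem:equilibrium} — gives $I\left(R||\ddot{P}\right)\geq I\left(R||Q^{\ast}\right)+m$ for all $R\in\dot{\mathcal{P}}_{\kappa}$, so the worst-case gain at $Q=Q^{\ast}$ equals $m$. Thus $Q^{\ast}$ strictly dominates every competitor, $\widetilde{\mathcal{P}}_{\kappa}=\left\{ Q^{\ast}\right\} $, and \eqref{eq:moderate-posterior-unique} returns $\widetilde{P}_{\kappa}=Q^{\ast}$, which is exactly \eqref{eq:maxent-sho}.

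I expect the main obstacle to be precisely this last uniqueness step: Theorem \ref{thm:equilibrium} pins down only the optimal value and a level-set description of the candidate posteriors, and one must combine strict convexity of the divergence with the projection inequality to collapse that description to the single point $Q^{\ast}$ and so dispose of the tie-break. Attainment of $Q^{\ast}$, as opposed to an unapproached infimum, should be inherited from whatever regularity the source of Lemma \ref{lem:equilibrium} assumes, so I would cite that directly or add a brief lower-semicontinuity remark rather than prove existence from scratch.
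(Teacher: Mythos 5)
Your proof is correct, and it is in fact more careful than the paper's treatment, which presents this corollary only as an ``immediate consequence'' of Theorem \ref{thm:equilibrium}: specializing equation \eqref{eq:candidate-MPs} to $\ddot{\mathcal{P}}=\{\ddot{P}\}$ and then invoking the tie-break \eqref{eq:moderate-posterior-unique} implicitly. The shared core is identical --- both you and the paper reduce the sup--inf to $\inf_{Q\in\dot{\mathcal{P}}_{\kappa}}I(Q||\ddot{P})$ by applying Lemma \ref{lem:equilibrium} with $\dot{\mathcal{P}}_{\kappa}$ in the role of the convex set, after checking convexity (affine image of $\dot{\mathcal{P}}$) and finiteness (convexity of $I(\cdot||\ddot{P})$ in its first argument). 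Where you genuinely diverge is the last step: rather than stopping at the level-set description of $\widetilde{\mathcal{P}}_{\kappa}$, you use the decomposition $I(R||\ddot{P})-I(R||Q)=\int\log(dQ/d\ddot{P})\,dR$ together with the Csisz\'ar Pythagorean inequality for the I-projection $Q^{\ast}$ to show that $Q^{\ast}$ is the \emph{unique} maximizer of the worst-case gain over all of $\mathcal{P}$, so that $\widetilde{\mathcal{P}}_{\kappa}=\{Q^{\ast}\}$ and the tie-break is vacuous. That buys something real: as literally written, the candidate set in Theorem \ref{thm:equilibrium} is a level set of $I(\cdot||\ddot{P})$ that is not obviously confined to $\dot{\mathcal{P}}_{\kappa}$, and the tie-break \eqref{eq:moderate-posterior-unique} minimizes $I(\dot{P}||\cdot)$ rather than $I(\cdot||\ddot{P})$, so the paper's ``immediate'' passage to \eqref{eq:maxent-sho} tacitly assumes exactly the uniqueness you establish. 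The only loose end on your side, which you flag yourself, is attainment of the infimum; since the corollary's statement already presupposes that the $\arg\inf$ exists, deferring attainment to the regularity behind Lemma \ref{lem:equilibrium} is consistent with the paper's own level of rigor.
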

Inference is also simplified when at least one of the confidence posteriors
is sufficiently close to the working Bayesian posterior:
\begin{cor}
\label{cor:MP-is-benchmark}If $\ddot{\mathcal{P}}\cap\dot{\mathcal{P}}_{\kappa}$
is nonempty, $\dot{\mathcal{P}}$ is convex, and $I\left(P^{\prime}||P^{\prime\prime}\right)<\infty$
for all $P^{\prime}\in\dot{\mathcal{P}}$ and $P^{\prime\prime}\in\ddot{\mathcal{P}}$,
then the moderate posterior is the confidence posterior that is closest
to the working Bayesian posterior in the sense that 
\begin{equation}
\widetilde{P}_{\kappa}=\arg\inf_{P^{\prime\prime}\in\ddot{\mathcal{P}}\cap\dot{\mathcal{P}}_{\kappa}}I\left(\dot{P}||P^{\prime\prime}\right).\label{eq:MP-is-benchmark}
\end{equation}
\end{cor}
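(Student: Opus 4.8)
The plan is to push the hypotheses through Theorem~\ref{thm:equilibrium}, which already expresses $\widetilde{P}_{\kappa}$ as the member of the candidate set $\widetilde{\mathcal{P}}_{\kappa}$ of equation~\eqref{eq:candidate-MPs} lying closest to $\dot{P}$ in the sense of equation~\eqref{eq:moderate-posterior-unique}. Everything then hinges on showing that, as soon as $\ddot{\mathcal{P}}\cap\dot{\mathcal{P}}_{\kappa}$ is nonempty, the candidate set collapses onto $\ddot{\mathcal{P}}\cap\dot{\mathcal{P}}_{\kappa}$, so that the tie-breaking minimization of $I(\dot{P}||\cdot)$ in \eqref{eq:moderate-posterior-unique} coincides with the minimization displayed in \eqref{eq:MP-is-benchmark}.

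First I would evaluate the minimax value on the right-hand side of \eqref{eq:candidate-MPs}. Taking any $R\in\ddot{\mathcal{P}}\cap\dot{\mathcal{P}}_{\kappa}$ and using it at once as the inner $Q\in\dot{\mathcal{P}}_{\kappa}$ and the outer $P''\in\ddot{\mathcal{P}}$ gives $\inf_{P''\in\ddot{\mathcal{P}}}\inf_{Q\in\dot{\mathcal{P}}_{\kappa}}I(Q||P'')\le I(R||R)=0$; since $I(\cdot||\cdot)\ge0$ with equality only between identical distributions, the value is exactly $0$. Substituting into \eqref{eq:candidate-MPs} reduces the candidate set to $\widetilde{\mathcal{P}}_{\kappa}=\{P\in\mathcal{P}:\inf_{P''\in\ddot{\mathcal{P}}}I(P||P'')=0\}$.

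The real obstacle is to determine exactly which distributions populate this collapsed set. One inclusion is free: every $P\in\ddot{\mathcal{P}}$ satisfies $\inf_{P''\in\ddot{\mathcal{P}}}I(P||P'')=0$ by taking $P''=P$, so $\ddot{\mathcal{P}}\cap\dot{\mathcal{P}}_{\kappa}\subseteq\widetilde{\mathcal{P}}_{\kappa}$ and hence $\inf_{\widetilde{\mathcal{P}}_{\kappa}}I(\dot{P}||\cdot)\le\inf_{\ddot{\mathcal{P}}\cap\dot{\mathcal{P}}_{\kappa}}I(\dot{P}||\cdot)$. The harder direction is to show that the selected $\widetilde{P}_{\kappa}$ cannot escape $\ddot{\mathcal{P}}\cap\dot{\mathcal{P}}_{\kappa}$. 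For this I would return to the equilibrium behind Lemma~\ref{lem:equilibrium}: for each $P''$ the supremizing $Q$ in $\sup_{Q\in\mathcal{P}}\inf_{P'\in\dot{\mathcal{P}}_{\kappa}}I(P'||P''\rightsquigarrow Q)$ is the information projection of $P''$ onto $\dot{\mathcal{P}}_{\kappa}$ and so lies in $\dot{\mathcal{P}}_{\kappa}$; by its defining minimax property \eqref{eq:moderate-posterior} the moderate posterior is such a supremizer for the minimizing confidence posterior and therefore lies in $\dot{\mathcal{P}}_{\kappa}$. Together with $\inf_{P''\in\ddot{\mathcal{P}}}I(\widetilde{P}_{\kappa}||P'')=0$ this forces $\widetilde{P}_{\kappa}\in\ddot{\mathcal{P}}\cap\dot{\mathcal{P}}_{\kappa}$, the finiteness hypothesis $I(P'||P'')<\infty$ serving to rule out a zero-divergence point that fails to coincide with its target (and if $\ddot{\mathcal{P}}$ is closed this last point is automatic).

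With both inclusions established, the minimization in \eqref{eq:moderate-posterior-unique} runs effectively over $\ddot{\mathcal{P}}\cap\dot{\mathcal{P}}_{\kappa}$ and selects its member minimizing $I(\dot{P}||\cdot)$, which is precisely \eqref{eq:MP-is-benchmark}. The reading of $\widetilde{P}_{\kappa}$ as ``the confidence posterior closest to the working Bayesian posterior'' then follows, since each such member is simultaneously a confidence posterior in $\ddot{\mathcal{P}}$ and a mixture in $\dot{\mathcal{P}}_{\kappa}$.
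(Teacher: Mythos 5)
Your first half matches the paper's proof: for any $R\in\ddot{\mathcal{P}}\cap\dot{\mathcal{P}}_{\kappa}$ the value $\inf_{P^{\prime\prime}\in\ddot{\mathcal{P}}}\inf_{Q\in\dot{\mathcal{P}}_{\kappa}}I\left(Q||P^{\prime\prime}\right)$ is squeezed between $0$ and $I\left(R||R\right)=0$, so Theorem \ref{thm:equilibrium} collapses the candidate set to $\left\{ P\in\mathcal{P}:\inf_{P^{\prime\prime}\in\ddot{\mathcal{P}}}I\left(P||P^{\prime\prime}\right)=0\right\} $; the paper reduces matters to the same two observations $I\left(P^{\prime\prime}||P^{\prime\prime}\right)=0$ and $I\left(P||P^{\prime\prime}\right)>0$ for $P\ne P^{\prime\prime}$. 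The gap is in your ``harder direction.'' The candidate set $\widetilde{\mathcal{P}}_{\kappa}$ is defined by equation \eqref{eq:candidate-MPs}, i.e.\ by the scalar condition $\inf_{P^{\prime\prime}\in\ddot{\mathcal{P}}}I\left(P||P^{\prime\prime}\right)=0$, not as the set of supremizers of the inner game for the minimizing confidence posterior. Your appeal to Lemma \ref{lem:equilibrium} does show that, for each \emph{fixed} $P^{\prime\prime}$, the unique maximizer of $Q\mapsto\inf_{P\in\dot{\mathcal{P}}_{\kappa}}I\left(P||P^{\prime\prime}\rightsquigarrow Q\right)$ is the information projection of $P^{\prime\prime}$ onto $\dot{\mathcal{P}}_{\kappa}$; but membership in $\widetilde{\mathcal{P}}_{\kappa}$ only requires the infimum over $P^{\prime\prime}$ of the reduced criterion to attain the minimax value, and you yourself prove two sentences earlier that \emph{every} element of $\ddot{\mathcal{P}}$ does so, including any element of $\ddot{\mathcal{P}}\setminus\dot{\mathcal{P}}_{\kappa}$. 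So the conclusion $\widetilde{P}_{\kappa}\in\dot{\mathcal{P}}_{\kappa}$ does not follow from the definitions; if some $\ddot{P}^{\left(2\right)}\in\ddot{\mathcal{P}}\setminus\dot{\mathcal{P}}_{\kappa}$ had $I\left(\dot{P}||\ddot{P}^{\left(2\right)}\right)$ smaller than the infimum over $\ddot{\mathcal{P}}\cap\dot{\mathcal{P}}_{\kappa}$, equation \eqref{eq:moderate-posterior-unique} would select it and your argument would give no reason for \eqref{eq:MP-is-benchmark} to hold.

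The paper's own proof does not attempt your projection argument: it identifies $\widetilde{\mathcal{P}}_{\kappa}$ with $\ddot{\mathcal{P}}\cap\dot{\mathcal{P}}_{\kappa}$ directly from $I\left(P||P^{\prime\prime}\right)=0$ if and only if $P=P^{\prime\prime}$, in effect restricting the benchmark infimum to $\ddot{\mathcal{P}}\cap\dot{\mathcal{P}}_{\kappa}$. The tension you uncovered by noting $\ddot{\mathcal{P}}\subseteq\widetilde{\mathcal{P}}_{\kappa}$ is real, but it disappears exactly when $\ddot{\mathcal{P}}\subseteq\dot{\mathcal{P}}_{\kappa}$ --- a singleton $\ddot{\mathcal{P}}$ as in Corollary \ref{cor:single-MP} and Example \ref{exa:ambiguous-normal}, or the unconstrained case $\dot{\mathcal{P}}=\mathcal{P}$ of Remark \ref{rem:no-caution-dependence} --- which is the regime in which the corollary is actually invoked. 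To close your proof you should either work under that stronger hypothesis or argue directly that the infima in \eqref{eq:moderate-posterior-unique} and \eqref{eq:MP-is-benchmark} coincide, rather than routing through a containment $\widetilde{P}_{\kappa}\in\dot{\mathcal{P}}_{\kappa}$ that the definition of the candidate set does not supply.
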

\begin{proof}
For any $P^{\prime\prime}\in\ddot{\mathcal{P}}\cap\dot{\mathcal{P}}_{\kappa}$,
equation \eqref{eq:cross-entropy} implies both $\inf_{Q\in\dot{\mathcal{P}}_{\kappa}}I\left(Q||P^{\prime\prime}\right)=I\left(P^{\prime\prime}||P^{\prime\prime}\right)=0$
and $I\left(P||P^{\prime\prime}\right)>0$ for all $P\in\mathcal{P}\backslash\left(\ddot{\mathcal{P}}\cap\dot{\mathcal{P}}_{\kappa}\right)$
\citep[e.g.,][]{CoverThomas1991}. Thus, by Theorem \ref{thm:equilibrium},
\[
\widetilde{\mathcal{P}}_{\kappa}=\left\{ P\in\mathcal{P}:I\left(P||P^{\prime\prime}\right)=0,P^{\prime\prime}\in\ddot{\mathcal{P}}\cap\dot{\mathcal{P}}_{\kappa}\right\} =\ddot{\mathcal{P}}\cap\dot{\mathcal{P}}_{\kappa},
\]
which was assumed to be nonempty. Equation \eqref{eq:MP-is-benchmark}
then follows from equation \eqref{eq:moderate-posterior-unique}.\end{proof}
\begin{rem}
\label{rem:no-caution-dependence}Unless $\kappa=0$, the condition
that $\ddot{\mathcal{P}}\cap\dot{\mathcal{P}}_{\kappa}$ be nonempty
holds whenever the plausible posteriors are sufficiently unrestricted.
The most important such setting for applications is a complete lack
of constraints $\left(\dot{\mathcal{P}}=\mathcal{P}\right)$, in which
case 
\[
\ddot{\mathcal{P}}\cap\dot{\mathcal{P}}_{\kappa}=\ddot{\mathcal{P}}\cap\left\{ \kappa P+\left(1-\kappa\right)\dot{P}:P\in\mathcal{P}\right\} 
\]
and, if $\mathcal{P}$ is convex and unbounded, then $\ddot{\mathcal{P}}\cap\dot{\mathcal{P}}_{\kappa}=\ddot{\mathcal{P}}\cap\mathcal{P}=\ddot{\mathcal{P}}$
for any $\kappa\in\left(0,1\right]$.
\end{rem}

\section{\label{sec:Examples}Examples}

The first two examples involve the continuous, scalar parameters typical
of point and interval estimation $\left(\Theta=\mathbb{R}\right)$.
For simplicity, each uses only a single confidence posterior $\left(\ddot{\mathcal{P}}=\left\{ \ddot{P}\right\} \right)$.
\begin{example}
\label{exa:normal-normal}$P_{\theta,1}$ is the normal distribution
of mean $\theta$ and variance 1, i.e., $X\sim\N\left(\theta,1\right)$,
and $X=x$ is observed. Further, $\theta\sim\N\left(\mu,\sigma^{2}\right)$
with unknown $\mu$ and $\sigma$ of known lower and upper bounds:
$\mu\in\left[\underline{\mu},\overline{\mu}\right]$; $\sigma\in\left[\underline{\sigma},\overline{\sigma}\right]$.
For generality, the bounds are extended real numbers: $\underline{\mu}\in\left\{ -\infty\right\} \cup\mathbb{R}$,
$\underline{\sigma}\in\left[0,\infty\right)$, $\overline{\mu}=\mathbb{R}\cup\left\{ \infty\right\} $,
$\overline{\sigma}=\left[0,\infty\right)\cup\left\{ \infty\right\} $.
The intervals $\left[\underline{\mu},\overline{\mu}\right]$ and $\left[\underline{\sigma},\overline{\sigma}\right]$
are open only as required to ensure that $\mu,\log\sigma\in\mathbb{R}$
in the presence of infinite bounds or $\underline{\sigma}=0$, e.g.,
$\underline{\mu}=0,\overline{\mu}=\infty\implies\left[\underline{\mu},\overline{\mu}\right]=\left[0,\infty\right)$.
The working prior is $\N\left(\dot{\mu},\dot{\sigma}^{2}\right)$
for some given $\dot{\mu}\in\left[\underline{\mu},\overline{\mu}\right]$
and $\dot{\sigma}\in\left[\underline{\sigma},\overline{\sigma}\right]$.
By Bayes's theorem \citep[e.g.,][]{CarlinLouis3}, 
\[
\dot{P}=\N\left(\frac{\dot{\mu}+\dot{\sigma}^{2}x}{1+\dot{\sigma}^{2}},\frac{\dot{\sigma}^{2}}{1+\dot{\sigma}^{2}}\right);
\]
\[
\dot{\mathcal{P}}=\left\{ \N\left(\frac{\mu+\sigma^{2}x}{1+\sigma^{2}},\frac{\sigma^{2}}{1+\sigma^{2}}\right):\mu\in\left[\underline{\mu},\overline{\mu}\right],\sigma\in\left[\underline{\sigma},\overline{\sigma}\right]\right\} .
\]
By contrast, $\ddot{P}$ is $\N\left(x,1\right)$, not depending on
any prior. This $\ddot{P}$ is a genuine confidence posterior (§\ref{sub:Confidence-posteriors}),
as can be verified from the fact that $\ddot{P}\left(\ddot{\theta}\le\theta\right)=P_{\theta,1}\left(X\ge x\right)$
for all $\theta\in\mathbb{R}$ and $x\in\mathbb{R}$. The performance
of any estimator $\widehat{\theta}$ of $\theta$ may be quantified
by its squared-error prediction loss: $L\left(\theta,\widehat{\theta}\right)=\left(\widehat{\theta}-\theta\right)^{2}$.
By equation \eqref{eq:CG}, the $\kappa$CG estimate is
\begin{eqnarray*}
\dot{a}_{\kappa} & = & \arg\inf_{\widehat{\theta}\in\mathbb{R}}\left(\kappa\sup_{P^{\prime}\in\dot{\mathcal{P}}}\int\left(\widehat{\theta}-\theta\right)^{2}dP^{\prime}\left(\theta\right)+\left(1-\kappa\right)\int\left(\widehat{\theta}-\theta\right)^{2}d\dot{P}\left(\theta\right)\right)\\
 & = & \arg\inf_{\widehat{\theta}\in\mathbb{R}}\left(\kappa\left[\int\left(\widehat{\theta}-\theta\right)^{2}dP_{\mu\left(\widehat{\theta}\right),\underline{\sigma}}\left(\theta\right)\right]+\left(1-\kappa\right)\int\left(\widehat{\theta}-\theta\right)^{2}d\dot{P}\left(\theta\right)\right),
\end{eqnarray*}
where $\mu\left(\widehat{\theta}\right)=\underline{\mu}$ if $\left|\widehat{\theta}-\underline{\mu}\right|>\left|\widehat{\theta}-\overline{\mu}\right|$
and $\mu\left(\widehat{\theta}\right)=\overline{\mu}$ otherwise;
$P_{\mu\left(\widehat{\theta}\right),\underline{\sigma}}=\N\left(\mu\left(\widehat{\theta}\right),\underline{\sigma}^{2}\right)$.
If $\underline{\sigma}=0$, $P_{\mu\left(\widehat{\theta}\right),0}$
is the Dirac measure at $\mu\left(\widehat{\theta}\right)$, implying
that
\[
\dot{a}_{\kappa}=\arg\inf_{\widehat{\theta}\in\mathbb{R}}\left(\kappa\left(\widehat{\theta}-\mu\left(\widehat{\theta}\right)\right)^{2}+\left(1-\kappa\right)\int\left(\widehat{\theta}-\theta\right)^{2}d\dot{P}\left(\theta\right)\right),
\]
which only has a solution if $\underline{\mu}>-\infty$ and $\overline{\mu}<\infty$.
Those restrictions are not needed for the estimate based on the moderate
posterior. Since 
\begin{equation}
\dot{\mathcal{P}}_{\kappa}=\left\{ \kappa\N\left(\frac{\mu+\sigma^{2}x}{1+\sigma^{2}},\frac{\sigma^{2}}{1+\sigma^{2}}\right)+\left(1-\kappa\right)\dot{P}:\mu\in\left[\underline{\mu},\overline{\mu}\right],\sigma\in\left[\underline{\sigma},\overline{\sigma}\right]\right\} ,\label{eq:normal-normal-revealed}
\end{equation}
Corollary \ref{cor:single-MP} entails
\begin{eqnarray*}
\widetilde{P}_{\kappa} & = & \arg\inf_{\mu\in\left[\underline{\mu},\overline{\mu}\right],\sigma\in\left[\underline{\sigma},\overline{\sigma}\right]}I\left(\N\left(\frac{\mu+\sigma^{2}x}{1+\sigma^{2}},\frac{\sigma^{2}}{1+\sigma^{2}}\right)||\N\left(x,1\right)\right)\\
 & = & \arg\inf_{\mu\in\left[\underline{\mu},\overline{\mu}\right],\sigma\in\left[\underline{\sigma},\overline{\sigma}\right]}\left(\log\frac{1+\sigma^{2}}{\sigma^{2}}+\frac{\sigma^{2}}{1+\sigma^{2}}+\left(\frac{\mu-x}{1+\sigma^{2}}\right)^{2}\right),
\end{eqnarray*}
with the second equality from, e.g., \citet[p. 189]{RefWorks:292}.
Substituting $\widetilde{P}_{\kappa}$ into equation \eqref{eq:IP}
gives the moderate-posterior-mean as the estimate of $\theta$:
\[
\widetilde{a}_{\kappa}=\arg\inf_{\widehat{\theta}\in\mathbb{R}}\int\left(\widehat{\theta}-\theta\right)^{2}d\widetilde{P}_{\kappa}\left(\theta\right)=\int\theta d\widetilde{P}_{\kappa}\left(\theta\right),
\]
which is unique even if $\underline{\mu}=-\infty$, $\overline{\mu}=\infty$,
$\underline{\sigma}=0$, and $\overline{\sigma}=\infty$.
\end{example}
The next example drops the parametric assumptions about the plausible
prior distributions.
\begin{example}
\label{exa:ambiguous-normal}$X\sim\N\left(\theta,1\right)$ with
no information about $\theta$ except that $\theta\in\mathbb{R}=\Theta$,
that $X=x$ is observed, and that $\dot{P}$ is the working Bayesian
posterior distribution of $\theta$. It follows that $\dot{\mathcal{P}}$
is the set of all distributions on the Borel space $\left(\mathbb{R},\mathcal{B}\left(\mathbb{R}\right)\right)$.
Again under quadratic loss, by equation \eqref{eq:CG}, the $\kappa$CG
estimate is
\[
\dot{a}_{\kappa}=\arg\inf_{\widehat{\theta}\in\mathbb{R}}\left(\kappa\sup_{P^{\prime}\in\dot{\mathcal{P}}}\int\left(\widehat{\theta}-\theta\right)^{2}dP^{\prime}\left(\theta\right)+\left(1-\kappa\right)\int\left(\widehat{\theta}-\theta\right)^{2}d\dot{P}\left(\theta\right)\right),
\]
which is the posterior mean $\int\theta d\dot{P}\left(\theta\right)$
if $\kappa=0$ but which has no unique value for any other value of
$\kappa$ since $\sup_{P^{\prime}\in\dot{\mathcal{P}}}\int\left(\widehat{\theta}-\theta\right)^{2}dP^{\prime}\left(\theta\right)=\infty$
for any $\widehat{\theta}$. By contrast, equation \eqref{eq:IP}
specifies the unique moderate-posterior estimate given $\ddot{P}=\N\left(x,1\right)$:
\[
\widetilde{a}_{\kappa}=\arg\inf_{\widehat{\theta}\in\mathbb{R}}\int\left(\widehat{\theta}-\theta\right)^{2}d\widetilde{P}_{\kappa}\left(\theta\right)=\int\theta d\widetilde{P}_{\kappa}\left(\theta\right),
\]
where, provided that $\kappa>0$, $\widetilde{P}_{\kappa}=\ddot{P}$
according to Corollary \ref{cor:MP-is-benchmark} since $\ddot{P}\in\mathcal{P}=\dot{\mathcal{P}}_{\kappa}$,
leading to
\[
\widetilde{a}_{\kappa}=\int\theta d\ddot{P}\left(\theta\right),
\]
the frequentist posterior mean.
\end{example}
The last example involves a discrete focus parameter, as is typical
of hypothesis testing and model selection applications.
\begin{example}
\label{exa:single-test}Consider the indicator parameter $\theta$
defined such that $\theta=0$ if the null hypothesis about $\theta_{\ast\ast}$
is true $\left(\theta_{\ast\ast}=0\right)$ and $\theta=1$ if the
alternative hypothesis about $\theta_{\ast\ast}$ is true $\left(\theta_{\ast\ast}\ne0\right)$.
Equivalently, in terms of $\theta_{\ast}=\left|\theta_{\ast\ast}\right|$,
$\theta=0$ if $\theta_{\ast}=0$ and $\theta=1$ if $\theta_{\ast}>0$.
If $\dot{P}$ is a working Bayesian posterior for $\theta_{\ast\ast}$,
then $\dot{P}\left(\dot{\theta}=0\right)$ is the corresponding working
Bayesian posterior probability that the null hypothesis is true. Let
$p^{\left(1\right)}$ and $p^{\left(2\right)}$ denote observed p-values
of the one-sided test of $\theta_{\ast}=0$ versus $\theta_{\ast}>0$
and thus of the two-sided test of $\theta_{\ast\ast}=0$ versus $\theta_{\ast\ast}\ne0$.
In this example, $p^{\left(1\right)}\left(x\right)\le p^{\left(2\right)}\left(x\right)$,
perhaps because $p^{\left(2\right)}\left(x\right)$ is based on a
test that makes weaker parametric assumptions than that of $p^{\left(1\right)}\left(x\right)$.
For $i=1,2$, let $\ddot{P}_{\ast}^{\left(i\right)}$ denote the
confidence posterior for $\theta_{\ast}$ defined given some $x\in\mathcal{X}$
such that
\[
\ddot{P}_{\ast}^{\left(i\right)}\left(\ddot{\theta}_{\ast}\le\theta_{\ast}\right)=P_{\theta_{\ast},\lambda_{\ast}}\left(p^{\left(i\right)}\left(X\right)\le p^{\left(i\right)}\left(x\right)\right)
\]
for all $\theta_{\ast}\in\Theta_{\ast}$ and $\lambda_{\ast}\in\Lambda_{\ast}$,
where the dependence of $\ddot{P}_{\ast}^{\left(i\right)}$ on $x$
is suppressed, in Section \ref{sub:Confidence-posteriors}. Since
$p^{\left(i\right)}\left(X\right)\sim\unif$$\left(0,1\right)$ under
the null hypothesis that $\theta_{\ast}=0$, it follows that 
\begin{equation}
\ddot{P}_{\ast}^{\left(i\right)}\left(\ddot{\theta}_{\ast}\le0\right)=\ddot{P}_{\ast}^{\left(i\right)}\left(\ddot{\theta}_{\ast}=0\right)=P_{0,\lambda_{\ast}}\left(p^{\left(i\right)}\left(X\right)\le p^{\left(i\right)}\left(x\right)\right)=p^{\left(i\right)}\left(x\right),\label{eq:confidence-0}
\end{equation}
i.e., the confidence posterior probability of the null hypothesis
is equal to the p-value \citep{smallScale,continuum}; cf. \citet{RefWorks:1369}.
With $\ddot{\theta}=0$ if $\ddot{\theta}_{\ast}=0$ and $\ddot{\theta}=1$
if $\ddot{\theta}_{\ast}\ne0$, equation \eqref{eq:confidence-0}
yields $\ddot{P}^{\left(i\right)}\left(\ddot{\theta}=0\right)=p^{\left(i\right)}\left(x\right)$.
From widely applicable conditions for two-sided hypothesis testing
\citep{RefWorks:1218,continuum} and with some $\underline{\dot{P}}_{\emptyset}^{\pri}\in\left(0,1\right)$
given as the lower bound of the prior probabilities of the null hypothesis
and the restriction that no such probability is 1, the knowledge base
is
\[
\dot{\mathcal{P}}=\left\{ P^{\prime}\in\mathcal{P}:\underline{\dot{P}}\left(\dot{\underline{\theta}}=0\right)=\underline{\dot{P}}\left(\left\{ 0\right\} \right)\le P^{\prime}\left(\left\{ 0\right\} \right)<1\right\} ,
\]
the set of plausible posteriors, the distributions on $\left(\left\{ 0,1\right\} ,2^{\left\{ 0,1\right\} }\right)$
with 
\[
\underline{\dot{P}}_{\emptyset}=\underline{\dot{P}}\left(\dot{\underline{\theta}}=0\right)=\left(1+\left(\frac{1-\underline{\dot{P}}_{\emptyset}^{\pri}}{\underline{\dot{P}}_{\emptyset}^{\pri}ep^{\left(2\right)}\left(x\right)\log\left[1/p^{\left(2\right)}\left(x\right)\right]}\right)\right)^{-1}\wedge\underline{\dot{P}}_{\emptyset}^{\pri}
\]
as the lower bound of the plausible posterior probability of the null
hypothesis, where $\dot{\underline{\theta}}\sim\underline{\dot{P}}$.
That lower bound is the greater of the two lower bounds found by separately
applying the methodology of \citet{RefWorks:1218} to $p^{\left(1\right)}\left(x\right)$
and $p^{\left(2\right)}\left(x\right)$. (The binary operator $\wedge$
in the above equation means {}``the minimum of,'' and $\vee$ will
similarly stand for {}``the maximum of.'') Since Theorem \ref{thm:equilibrium}
applies, the moderate posterior $\widetilde{P}_{\kappa}$ is given
by equation \eqref{eq:moderate-posterior-unique} with
\begin{eqnarray*}
\widetilde{\mathcal{P}}_{\kappa} & = & \left\{ P\in\mathcal{P}:I\left(P||\ddot{P}^{\left(1\right)}\right)\wedge I\left(P||\ddot{P}^{\left(2\right)}\right)=I\left(\widetilde{P}_{\kappa}^{\left(1\right)}||\ddot{P}^{\left(1\right)}\right)\wedge I\left(\widetilde{P}_{\kappa}^{\left(2\right)}||\ddot{P}^{\left(2\right)}\right)\right\} \\
 & = & \left\{ \widetilde{P}_{\kappa}^{\left(i\right)}:I\left(\widetilde{P}_{\kappa}^{\left(i\right)}||\ddot{P}^{\left(i\right)}\right)=I\left(\widetilde{P}_{\kappa}^{\left(1\right)}||\ddot{P}^{\left(1\right)}\right)\wedge I\left(\widetilde{P}_{\kappa}^{\left(2\right)}||\ddot{P}^{\left(2\right)}\right),i\in\left\{ 1,2\right\} \right\} ,
\end{eqnarray*}
where $\widetilde{P}_{\kappa}^{\left(i\right)}=\arg\inf_{Q\in\dot{\mathcal{P}}_{\kappa}}I\left(Q||\ddot{P}^{\left(i\right)}\right);$
$\dot{\mathcal{P}}_{\kappa}=\left\{ \kappa P^{\prime}+\left(1-\kappa\right)\dot{P}:P^{\prime}\in\mathcal{P},\underline{\dot{P}}_{\emptyset}\le P^{\prime}\left(\theta=0\right)<1\right\} $.
More simply, 
\[
\widetilde{P}_{\kappa}=\begin{cases}
\widetilde{P}_{\kappa}^{\left(1\right)} & \text{if }I\left(\widetilde{P}_{\kappa}^{\left(1\right)}||\ddot{P}^{\left(1\right)}\right)<I\left(\widetilde{P}_{\kappa}^{\left(2\right)}||\ddot{P}^{\left(2\right)}\right)\\
\widetilde{P}_{\kappa}^{\left(2\right)} & \text{if }I\left(\widetilde{P}_{\kappa}^{\left(1\right)}||\ddot{P}^{\left(1\right)}\right)>I\left(\widetilde{P}_{\kappa}^{\left(2\right)}||\ddot{P}^{\left(2\right)}\right)\\
\widetilde{P}_{\kappa}^{\left(1\right)} & \text{if }I\left(\widetilde{P}_{\kappa}^{\left(1\right)}||\ddot{P}^{\left(1\right)}\right)=I\left(\widetilde{P}_{\kappa}^{\left(2\right)}||\ddot{P}^{\left(2\right)}\right)\text{ and }I\left(\dot{P}||\widetilde{P}_{\kappa}^{\left(1\right)}\right)\le I\left(\dot{P}||\widetilde{P}_{\kappa}^{\left(2\right)}\right)\\
\widetilde{P}_{\kappa}^{\left(2\right)} & \text{if }I\left(\widetilde{P}_{\kappa}^{\left(1\right)}||\ddot{P}^{\left(1\right)}\right)=I\left(\widetilde{P}_{\kappa}^{\left(2\right)}||\ddot{P}^{\left(2\right)}\right)\text{ and }I\left(\dot{P}||\widetilde{P}_{\kappa}^{\left(1\right)}\right)\ge I\left(\dot{P}||\widetilde{P}_{\kappa}^{\left(2\right)}\right).
\end{cases}
\]
Letting $\dot{P}_{\emptyset}=\dot{P}\left(\dot{\theta}=0\right)$
and letting $\widetilde{\theta}$ denote the focus parameter according
to the moderate posterior $\left(\widetilde{\theta}\sim\widetilde{P}_{\kappa}\right)$,
\begin{eqnarray}
\widetilde{P}_{\kappa}^{\left(i\right)} & = & \arg\inf_{Q\in\dot{\mathcal{P}}_{\kappa}}\sum_{j=0,1}Q\left(\theta=j\right)\log\frac{Q\left(\theta=j\right)}{\ddot{P}^{\left(i\right)}\left(\theta=j\right)}\nonumber \\
\widetilde{P}_{\kappa}^{\left(i\right)}\left(\widetilde{\theta}=0\right) & = & \arg\inf_{Q_{\emptyset}\in\left\{ \kappa P_{\emptyset}+\left(1-\kappa\right)\dot{P}_{\emptyset}:P_{\emptyset}\in\left[\underline{\dot{P}}_{\emptyset},1\right)\right\} }Q_{\emptyset}\log\frac{Q_{\emptyset}}{p^{\left(i\right)}\left(x\right)}+\left(1-Q_{\emptyset}\right)\log\frac{1-Q_{\emptyset}}{1-p^{\left(i\right)}\left(x\right)}\nonumber \\
 & = & \arg\inf_{Q_{\emptyset}\in\left[\kappa\underline{\dot{P}}_{\emptyset}+\left(1-\kappa\right)\dot{P}_{\emptyset},\kappa+\left(1-\kappa\right)\dot{P}_{\emptyset}\right)}Q_{\emptyset}\log\frac{Q_{\emptyset}}{p^{\left(i\right)}\left(x\right)}+\left(1-Q_{\emptyset}\right)\log\frac{1-Q_{\emptyset}}{1-p^{\left(i\right)}\left(x\right)}\nonumber \\
 & = & \begin{cases}
\kappa\underline{\dot{P}}_{\emptyset}+\left(1-\kappa\right)\dot{P}_{\emptyset} & \text{if }p^{\left(i\right)}\left(x\right)<\kappa\underline{\dot{P}}_{\emptyset}+\left(1-\kappa\right)\dot{P}_{\emptyset}\\
p^{\left(i\right)}\left(x\right) & \text{if }\kappa\underline{\dot{P}}_{\emptyset}+\left(1-\kappa\right)\dot{P}_{\emptyset}\le p^{\left(i\right)}\left(x\right)\le\kappa+\left(1-\kappa\right)\dot{P}_{\emptyset}\\
\kappa+\left(1-\kappa\right)\dot{P}_{\emptyset} & \text{if }p^{\left(i\right)}\left(x\right)>\kappa+\left(1-\kappa\right)\dot{P}_{\emptyset}.
\end{cases}\label{eq:simple-hypothesis}
\end{eqnarray}
Since $\widetilde{P}_{\kappa}\in\widetilde{\mathcal{P}}_{\kappa}$,
\begin{equation}
\widetilde{P}_{\kappa}\left(\widetilde{\theta}=0\right)\in\begin{cases}
\left\{ \kappa\underline{\dot{P}}_{\emptyset}+\left(1-\kappa\right)\dot{P}_{\emptyset}\right\}  & \text{if }p^{\left(1\right)}\left(x\right)\le p^{\left(2\right)}\left(x\right)<\kappa\underline{\dot{P}}_{\emptyset}+\left(1-\kappa\right)\dot{P}_{\emptyset}\\
\left\{ p^{\left(2\right)}\left(x\right)\right\}  & \text{if }p^{\left(1\right)}\left(x\right)<\kappa\underline{\dot{P}}_{\emptyset}+\left(1-\kappa\right)\dot{P}_{\emptyset}\le p^{\left(2\right)}\left(x\right)\le\kappa+\left(1-\kappa\right)\dot{P}_{\emptyset}\\
\left\{ p^{\left(1\right)}\left(x\right),p^{\left(2\right)}\left(x\right)\right\}  & \text{if }\kappa\underline{\dot{P}}_{\emptyset}+\left(1-\kappa\right)\dot{P}_{\emptyset}\le p^{\left(1\right)}\left(x\right)\le p^{\left(2\right)}\left(x\right)\le\kappa+\left(1-\kappa\right)\dot{P}_{\emptyset}\\
\left\{ p^{\left(1\right)}\left(x\right)\right\}  & \text{if }\kappa\underline{\dot{P}}_{\emptyset}+\left(1-\kappa\right)\dot{P}_{\emptyset}\le p^{\left(1\right)}\left(x\right)\le\kappa+\left(1-\kappa\right)\dot{P}_{\emptyset}<p^{\left(2\right)}\left(x\right)\\
\left\{ \kappa+\left(1-\kappa\right)\dot{P}_{\emptyset}\right\}  & \text{if }p^{\left(2\right)}\left(x\right)\ge p^{\left(1\right)}\left(x\right)>\kappa+\left(1-\kappa\right)\dot{P}_{\emptyset},
\end{cases}\label{eq:simple-hypothesis-p-set}
\end{equation}
from which the extreme condition $p^{\left(1\right)}\left(x\right)<\kappa\underline{\dot{P}}_{\emptyset}+\left(1-\kappa\right)\dot{P}_{\emptyset}<\kappa+\left(1-\kappa\right)\dot{P}_{\emptyset}<p^{\left(2\right)}\left(x\right)$
is omitted for brevity. In the case of no caution, the working Bayesian
posterior probability is recovered: $\widetilde{P}_{0}\left(\widetilde{\theta}=0\right)=\dot{P}\left(\dot{\theta}=0\right)$,
which does not depend on $p\left(x\right)$. More interestingly, the
case of complete caution leads to 
\begin{equation}
\widetilde{P}_{1}\left(\widetilde{\theta}=0\right)\in\begin{cases}
\left\{ \underline{\dot{P}}\left(\dot{\underline{\theta}}=0\right)\right\}  & \text{if }p^{\left(1\right)}\left(x\right)\le p^{\left(2\right)}\left(x\right)<\underline{\dot{P}}_{\emptyset}\\
\left\{ p^{\left(2\right)}\left(x\right)\right\}  & \text{if }p^{\left(1\right)}\left(x\right)<\underline{\dot{P}}_{\emptyset}\le p^{\left(2\right)}\left(x\right)\\
\left\{ p^{\left(1\right)}\left(x\right),p^{\left(2\right)}\left(x\right)\right\}  & \text{if }\underline{\dot{P}}_{\emptyset}\le p^{\left(1\right)}\left(x\right)\le p^{\left(2\right)}\left(x\right),
\end{cases}\label{eq:extended-blend}
\end{equation}
which has no dependence on $\dot{P}\left(\dot{\theta}=0\right)$.
The simplifying effect of considering only a single p-value is evident
from using $p^{\left(1\right)}\left(x\right)=p^{\left(2\right)}\left(x\right)$
in the formulas \eqref{eq:simple-hypothesis-p-set} and \eqref{eq:extended-blend}.
For example, expression \eqref{eq:extended-blend} results in a unique
$\widetilde{P}_{1}\left(\widetilde{\theta}=0\right)$ equal to the
blended posterior probability of \citet{continuum}. When formulas
\eqref{eq:simple-hypothesis-p-set} and \eqref{eq:extended-blend}
say no more than $\widetilde{P}_{\kappa}\left(\widetilde{\theta}=0\right)\in\left\{ p^{\left(1\right)}\left(x\right),p^{\left(2\right)}\left(x\right)\right\} $,
equation \eqref{eq:moderate-posterior-unique} ensures the uniqueness
of the moderate posterior probability by equating it with the p-value
closest to the working Bayesian posterior probability:
\begin{eqnarray*}
\widetilde{P}_{\kappa} & = & \arg\inf_{P^{\prime\prime\prime}\in\left\{ p^{\left(1\right)}\left(x\right),p^{\left(2\right)}\left(x\right)\right\} }I\left(\dot{P}||P^{\prime\prime\prime}\right)=\ddot{P}^{\left(\widetilde{\imath}\right)};\\
\widetilde{\imath} & = & \arg\inf_{i\in\left\{ 1,2\right\} }\left(\dot{P}\left(\dot{\theta}=0\right)\log\frac{\dot{P}\left(\theta=0\right)}{p^{\left(i\right)}\left(x\right)}+\dot{P}\left(\dot{\theta}=1\right)\log\frac{\dot{P}\left(\dot{\theta}=1\right)}{1-p^{\left(i\right)}\left(x\right)}\right),
\end{eqnarray*}
which is a special case of Corollary \ref{cor:MP-is-benchmark}. In
this way, the caution parameter, the working Bayesian posterior, and
the constraints on the plausible posteriors together overcome the
dilemma of whether to use the more conservative p-value or the less
conservative p-value.
\end{example}
~

\section{\label{sec:Variations}Extending the caution framework}

\subsection{Variations of the framework}

The above framework for balancing Bayesian and frequentist approaches
to inference does not apply to all situations encountered in applications.
The various permutations of the Bayesian and confidence posteriors
as the \emph{working posterior} $\dot{P}$, used exclusively in the
absence of caution, and a \emph{benchmark posterior} $\ddot{P}$,
over which inference will be improved as much as possible, in equations
\eqref{eq:MP-single-CP} and \eqref{eq:maxent-sho} lead to four versions
of the proposed approach:
\begin{enumerate}
\item $\dot{P}$ is a Bayesian posterior in $\dot{\mathcal{P}}$, and $\ddot{P}$
is a confidence posterior. This version yields the balance between
Bayesian and frequentist inference defined in Section \ref{sec:General-theory}
and illustrated in Section \ref{sec:Examples}.
\item $\dot{P}$ is a confidence posterior, and $\ddot{P}$ is a Bayesian
posterior in $\dot{\mathcal{P}}$. The potential uses of this reversal
are unclear since it would paradoxically lead to dependence on a single
Bayesian posterior to the extent of the caution.
\item $\dot{P}=\ddot{P},$ where $\dot{P}$ is a Bayesian posterior in $\dot{\mathcal{P}}$.
Using the same Bayesian posterior as both the working posterior and
the benchmark posterior is attractive in the absence of reliable confidence
intervals or p-values from which a confidence posterior could be constructed.
Thus, this version extends the scope of the framework across the domains
to which Bayesian methods apply. However, this version becomes trivial
whenever equation \eqref{eq:maxent-sho} holds according to Corollary
\ref{cor:single-MP}, for in that case, $\widetilde{P}_{\kappa}=\arg\inf_{Q\in\dot{\mathcal{P}}_{\kappa}}I\left(Q||\dot{P}\right)=\dot{P}$
for all $\kappa\in\left[0,1\right]$ since $\dot{P}\in\dot{\mathcal{P}}_{\kappa}$
necessarily. In other words, the Bayesian posterior would be used
for inference irrespective of the degree of caution and the knowledge
base.
\item $\dot{P}=\ddot{P},$ where $\ddot{P}$ is a confidence posterior.
Using the same Bayesian posterior as both the working posterior and
the benchmark posterior is useful when a set $\dot{\mathcal{P}}$
of plausible posteriors can be specified but when no member of that
set can be singled out as special. In many cases involving a continuous
parameter $\theta$, no such member can be derived from the knowledge
base $\dot{\mathcal{P}}$ without imposing arbitrary procedures such
as averaging over the members with respect to some measure chosen
for convenience. That will be explained in Section \ref{sub:Inference-without-OP},
where the case of two unequal confidence posteriors will also be considered.
\end{enumerate}
For simplicity, the versions are described as if $\ddot{\mathcal{P}}=\left\{ \ddot{P}\right\} $,
but they also pertain to a set $\ddot{\mathcal{P}}$ of multiple benchmark
posteriors that define the moderate posterior $\widetilde{P}_{\kappa}$
according to equation \eqref{eq:moderate-posterior-unique}. The three
most applicable of those versions are summarized in Table \ref{tab:versions}. 

Generalizing beyond those versions, the working posterior $\dot{P}$
can be any posterior distribution that would be used exclusively in
the absence of caution, whereas when there is caution, inferences
are made with the goal that they improve upon those that would be
made using any other posterior distribution in $\ddot{\mathcal{P}}$.
The application at hand can help determine which of those distributions
is a Bayesian posterior and which is some other type of distribution
such as a confidence distribution. For example, given a working posterior
$\dot{P}$ from a proper prior, a posterior from an improper prior
could be used as the benchmark posterior $\ddot{P}$ in the absence
of a suitable confidence posterior \citep[cf.][]{continuum}.

\begin{table}
\begin{tabular}{|c|c|c|}
\hline 
$\begin{aligned}\,\\
\text{\textbf{Setting}}
\end{aligned}
$ & $\begin{aligned}\,\\
\boldsymbol{\dot{P}}
\end{aligned}
$ & $\begin{aligned}\,\\
\boldsymbol{\ddot{P}}\text{ or }\boldsymbol{\ddot{\mathcal{P}}}
\end{aligned}
$\tabularnewline
\hline 
Bayes and frequentist approaches apply & Bayes posterior & confidence posterior\tabularnewline
\hline 
no confidence intervals or p-values & \multicolumn{2}{c|}{Bayes posterior}\tabularnewline
\hline 
continuous $\theta$ \& only a set of Bayes posteriors & \multicolumn{2}{c|}{confidence posterior}\tabularnewline
\hline 
\end{tabular}

\caption{Settings for three versions of the proposed framework.\label{tab:versions}}
\end{table}

\subsection{\label{sub:Inference-without-OP}Inference without a working Bayesian
posterior}

The more interesting of the two widely applicable variations of the
framework is that in which both $\dot{P}$ and $\ddot{P}$ are the
same confidence posterior. Thus, some implications of using a single
confidence posterior simultaneously as the working posterior and as
the benchmark posterior $\left(\dot{P}=\ddot{P}\right)$ merit noting.
First, complete caution $\left(\kappa=1\right)$ leads to ignoring
the role of the confidence posterior as a working posterior and thereby
collapses to the blended inferences of \citet{continuum}. Second,
when $\kappa<1$ and the confidence posterior is not a plausible posterior
$\left(\ddot{P}\notin\dot{\mathcal{P}}\right)$, the moderate posterior
may not be a plausible posterior $\left(\widetilde{P}_{\kappa}\notin\dot{\mathcal{P}}\right)$.
In fact, whenever sufficient conditions for Corollary \ref{cor:single-MP}
are met, $\widetilde{P}_{\kappa}$ will be plausible only if $\ddot{P}$
is plausible. That suggests the use of $\kappa=1$ in the absence
of a working Bayesian posterior in order to avoid excessive dependence
on $\ddot{P}$ at the expense of $\dot{\mathcal{P}}$, the knowledge
base. On the other hand, allowing $\widetilde{P}_{\kappa}\notin\dot{\mathcal{P}}$
makes $\widetilde{P}_{\kappa}$ less dependent on the precise borders
of $\dot{\mathcal{P}}$, and this may be desirable to the extent that
such borders are uncertain or subjectively specified. Third, when
$\ddot{P}\in\dot{\mathcal{P}}$, the moderate posterior is simply
equal to the confidence posterior $\left(\widetilde{P}_{\kappa}=\ddot{P}\right)$
under the sufficient conditions for equation \eqref{eq:maxent-sho}
given by Corollary \ref{cor:single-MP}. The following examples illustrate
the second and third implications.
\begin{example}
(Variation of Example \ref{exa:ambiguous-normal}.) This example is
trivial since $\dot{P}\in\mathcal{P}$, $\dot{P}=\ddot{P}$, and Corollary
\ref{cor:single-MP} entail $\widetilde{P}_{\kappa}=\ddot{P}$. 
\end{example}
More generally, $\ddot{P}\in\dot{\mathcal{P}}$ and $\dot{P}=\ddot{P}$
imply $\widetilde{P}_{\kappa}=\ddot{P}$ by Corollary \ref{cor:MP-is-benchmark}. 
\begin{example}
(Variation of Example \ref{exa:single-test}.) Because $\ddot{P}\left(\ddot{\theta}=0\right)=p\left(x\right)$,
the identity $\dot{P}=\ddot{P}$ yields $\dot{P}_{\emptyset}=p\left(x\right)$,
reducing equation \eqref{eq:simple-hypothesis} to
\[
\widetilde{P}_{\kappa}\left(\widetilde{\theta}=0\right)=\begin{cases}
\kappa\underline{\dot{P}}_{\emptyset}+\left(1-\kappa\right)p\left(x\right) & \text{if }p\left(x\right)<\underline{\dot{P}}_{\emptyset}\\
p\left(x\right) & \text{if }p\left(x\right)\ge\underline{\dot{P}}_{\emptyset}.
\end{cases}
\]
Re-expressing this as $\widetilde{P}_{\kappa}\left(\widetilde{\theta}=0\right)=\left[\kappa\underline{\dot{P}}_{\emptyset}+\left(1-\kappa\right)p\left(x\right)\right]\vee p\left(x\right)$
facilitates comparison with the equation $\widetilde{P}_{1}\left(\widetilde{\theta}=0\right)=\underline{\dot{P}}_{\emptyset}\vee p\left(x\right)$
used in the blended inference framework \citep{continuum}. Therefore,
$\kappa\in\left[0,1\right)$ entails that $\widetilde{P}_{\kappa}\left(\widetilde{\theta}=0\right)$
is less than the lower bound $\underline{\dot{P}}_{\emptyset}$ whenever
$p\left(x\right)<\underline{\dot{P}}_{\emptyset}$. That would be
clearly unacceptable if $\underline{\dot{P}}_{\emptyset}^{\pri}$
is scientifically established, but if $\underline{\dot{P}}_{\emptyset}^{\pri}$
is instead highly uncertain or subjectively assessed, then $\widetilde{P}_{\kappa}\left(\widetilde{\theta}=0\right)$
can bypass $\underline{\dot{P}}_{\emptyset}$ as warranted. 
\end{example}
~

An alternative to the above approach in the absence of a specified
$\dot{P}$ is to apply the strategy of Section \ref{sec:General-theory}
with $\dot{P}$ as a function of $\dot{\mathcal{P}}$, following \citet{Gajdos200827}.
Examples of functions that transform a set of distributions to a single
distribution include the Steiner point \citep{Gajdos200827}, the
arithmetic mean ({}``center of mass''), and the maximum entropy
distribution \citep{Paris232410}. In the continuous-parameter case,
such functions require a base measure for partitioning.

There is no need to impose an arbitrary base measure if two different
confidence posteriors $\dot{P}$ and $\ddot{P}$ are under consideration
$\left(\dot{P}\ne\ddot{P}\right)$. Using them as the working posterior
and the benchmark posterior in equations \eqref{eq:MP-single-CP}
and \eqref{eq:maxent-sho} would be most appropriate when $\dot{P}$
represents a newer or more risky procedure and when $\ddot{P}$ corresponds
to a better established or more thoroughly tested procedure. More
generally, equation \eqref{eq:moderate-posterior-unique} specifies
how to apply a working confidence posterior $\dot{P}$ with a set
$\ddot{\mathcal{P}}$ of benchmark confidence posteriors.

\section{\label{sec:Discussion}Discussion}

The featured moderate-posterior methodology has been contrasted with
the simpler $\kappa$CG methodology. As Examples \ref{exa:normal-normal}
and \ref{exa:ambiguous-normal} illustrated under quadratic loss,
the former can yield unique actions in a wide variety of settings
in which the latter cannot. Using CG minimaxity $\left(\kappa=1\right)$,
uniqueness has been achieved under quadratic loss by restricting the
action space to finite bounds \citep{Betro1992b} and by similarly
restricting the parameter space $\Theta$ \citep{AbdollahBayati2011}.
The moderate-posterior estimators did not require such restrictions.

The main advantage of the moderate-posterior framework is that it
provides first principles from which a statistician may derive a Bayesian
analysis, a frequentist analysis, or a combination of the two, depending
on the chosen level of caution and on the quality of prior information.
This allows the caution level to be precisely reported with the resulting
statistical inferences. In addition, the caution level may be determined
by the needs of an organization or collaborating scientist rather
than by the personal attitude of the statistician. 

Various factors may be considered in choosing the level of caution.
For example, more caution with Bayesian inference may be warranted
when the confidence posterior represents a frequentist procedure that
has stood the test of time than when it represents a new frequentist
procedure based on questionable assumptions. The caution level could
then be interpreted as the pre-data degree of reluctance an agent
has in modifying the frequentist procedures encoded in the confidence
posterior.

The moderate-posterior framework of Section \ref{sec:General-theory}
is general enough to incorporate conflicting frequentist approaches,
as seen in Example \ref{exa:single-test}. For additional generality,
Section \ref{sub:Inference-without-OP} provides ways to modify the
framework for situations in which any dependence on a subjective or
guessed Bayesian posterior would be undesirable. 

In other situations, any dependence of inference on the level of caution
would be undesirable. Provided that there is at least a little caution,
the use of a sufficiently broad set of plausible posteriors under
the unmodified framework (§\ref{sec:General-theory}) eliminates any
other dependence on the degree of caution (Remark \ref{rem:no-caution-dependence}).

\section*{Acknowledgments}

This research was partially supported  by the Canada Foundation for
Innovation, by the Ministry of Research and Innovation of Ontario,
and by the Faculty of Medicine of the University of Ottawa. 

\begin{flushleft}
\bibliographystyle{elsarticle-harv}
\bibliography{refman}

\par\end{flushleft}

\end{document}